\documentclass[12pt]{amsart}
\usepackage{graphicx,dbnsymb,amssymb,floatflt,color,epic,eepic,pb-diagram,xfrac}
\usepackage{caption}
\usepackage{subcaption}
\usepackage{picins,paralist} 
\usepackage{amsmath}
\usepackage[all]{xy}

\newtheorem{theorem}{Theorem}
\newtheorem{proposition}{Proposition}[section]

\newtheorem{lemma}[proposition]{Lemma}
\newtheorem{corollary}[proposition]{Corollary}

\theoremstyle{definition}
\newtheorem{definition}[proposition]{Definition}

\newtheorem{example}[proposition]{Example}

\newtheorem{remark}[proposition]{Remark}

\newlength{\standardunitlength}
\setlength{\standardunitlength}{0.00083333in}

\catcode`\@=11
\long\def\@makecaption#1#2{%
    \vskip 10pt
    \setbox\@tempboxa\hbox{
      \small\sf{\bfcaptionfont #1. }\ignorespaces #2}%
    \ifdim \wd\@tempboxa >\captionwidth {%
        \rightskip=\@captionmargin\leftskip=\@captionmargin
        \unhbox\@tempboxa\par}%
      \else
        \hbox to\hsize{\hfil\box\@tempboxa\hfil}%
    \fi}
\font\bfcaptionfont=cmssbx10 scaled \magstephalf
\newdimen\@captionmargin\@captionmargin=2\parindent
\newdimen\captionwidth\captionwidth=\hsize
\catcode`\@=12

\def\qed{{\hfill\text{$\Box$}}}

\newlength{\globalparindent}
\setlength{\globalparindent}{\parindent}

\marginparwidth 0pt
\marginparsep 0pt

\textwidth   6.5in
\textheight  9.0 in
\oddsidemargin 9pt
\evensidemargin 9pt

\topmargin -.6in
\headsep .5in

\def\calA{{\mathcal A}}
\def\calC{{\mathcal C}}

\def\calSo{{\mathcal S}_o}

\def\calD{{\mathcal D}}

\newcommand{\Cob}{{\mathcal Cob}}
\newcommand{\Cobo}{{\mathcal Cob}^3_o}

\newcommand{\Cobdl}{{\mathcal Cob}_{\bullet/l}}

\newcommand{\Kh}{{\text{\it Kh}}}

\newcommand{\Kob}{\operatorname{Kob}}

\newcommand{\Kobo}{\operatorname{Kob}_o}

\newcommand{\Koboh}{{\operatorname{Kob}_{o/h}}}
\newcommand{\Kobh}{{\operatorname{Kob}_{/h}}}

\newcommand{\Kom}{\operatorname{Kom}}
\newcommand{\Komh}{\operatorname{Kom}_{/h}}

\newcommand{\Mat}{\operatorname{Mat}}

\renewcommand{\qed}{~\hfill$\square$}

\begin{document}
\newdimen\captionwidth\captionwidth=\hsize

\title{Khovanov Homology for Alternating Tangles}

\author{Dror Bar-Natan}
\address{
 Department of Mathematics\\
 University of Toronto\\
 Toronto Ontario M5S 2E4\\
 Canada
}
\email{drorbn@math.toronto.edu} \urladdr{http://www.math.toronto.edu/~drorbn}

\author{Hernando Burgos-Soto} \address{
 George Brown College\\
 Toronto Ontario M5A 1N1\\
 Canada
}
\email{hburgos@georgebrown.ca}
\urladdr{http://individual.utoronto.ca/hernandoburgos/}

\date{
  First edition: March.{} 22, 2010.
  This edition: \today
}

\subjclass{57M25}
 \keywords{
  Cobordism,
  Coherently diagonal complex
  Degree-shifted rotation number,
  Delooping,
Gravity information,
  Khovanov homology,
  Diagonal complex,
  Planar algebra,
  Rotation number.
}

\thanks{The first author was partially supported by NSERC grant RGPIN 262178}

\begin{abstract}
  We describe a ``concentration on the diagonal" condition on the Khovanov complex of tangles, show that this condition is satisfied by the Khovanov complex of the single crossing tangles $(\overcrossing)$ and
$(\undercrossing)$, and prove that it is preserved by alternating planar algebra compositions. Hence, this condition is satisfied by the Khovanov complex of all alternating tangles. Finally, in the case of $0$-tangles, meaning links, our condition is equivalent to a well known result \cite{Lee1} which states that the Khovanov homology of a non-split alternating link is supported on two diagonals. Thus our condition is a generalization of Lee's Theorem to the case of tangles
\end{abstract}


\maketitle

\tableofcontents

\section{Introduction} \label{sec:intro}

Khovanov \cite{Khov1} constructed an invariant of links which opened
new prospects in knot theory and which is now known as Khovanov
homology. Bar-Natan in \cite{Bar}  computed
this invariant and found that it is a stronger invariant than the
Jones polynomial. Khovanov, Bar-Natan and Garoufalidis \cite{Ga}
formulated several conjectures related to the Khovanov homology. One
of these refers to the fact that the Khovanov homology of a
non-split alternating link is supported in two lines. To see this,
in Table \ref{Tab:homborr}, we present the dimension of the groups in
the Khovanov homology for the Borromean link and illustrate that the
non-trivial groups are located in two consecutive diagonals.
The fact that every alternating link
satisfies this property was proved by Lee in \cite{Lee1}. \\
\begin{table}
\centering \begin{tabular}{|c||c|c|c|c|c|c|c|}
  \hline
  j$\setminus$i & -3 & -2 & -1 & 0 & 1 & 2 & 3 \\
  \hline \hline
  7 &  &  &  &  &  &  & 1\\
  \hline
  5 &  &  &  &  &  & 2 &  \\
  \hline
  3 &  &  &  &  &  & 1 & \\
  \hline
  1 &  &  &  & 4 & 2 &  & \\
  \hline
  -1 &  &  & 2 & 4 &  &  & \\
  \hline
  -3 &  & 1 &  &  &  &  & \\
  \hline
  -5 &  & 2 &  &  &  &  & \\
  \hline
  -7 & 1 &  &  &  &  &  & \\
  \hline
\end{tabular}
\caption{The Khovanov homology for the Borromean
link}\label{Tab:homborr}
\end{table}
\indent  In \cite{Bar1} Bar-Natan presented a generalization of
Khovanov homology to tangles. In his approach, a formal chain complex is
assigned to every tangle. This formal chain complex, regarded within
a special category, is an (up to homotopy) invariant of the tangle.
 For
the particular case in which the tangle is a link, this chain complex coincides with the cube of smoothings presented in \cite{Khov1}.\\
  \indent This local Khovanov theory was used in \cite{Bar2} to make an algorithm which provides a
  faster computation of the Khovanov homology of a link. The technique used in this paper was also important for theoretical reasons.  We can apply it to prove the invariance of the Khovanov homology, see \cite{Bar2}. It was also used in \cite{Bar3} to give a simple
  proof of Lee's result stated in \cite{Lee2}, about the dimension of the Lee variant of the Khovanov homology.
  Here, we will show how it can be used to state a generalization to tangles of the aforementioned Lee's theorem \cite{Lee1}
about the Khovanov homology of alternating links. Most of the success attained by this algorithm is due to the simplification of the Khovanov complex associated to a tangle. This simplification consists of the elimination of the loops in the smoothing of the complex  (delooping), and the isomorphisms that appear as entries within differentials (Gaussian elimination).  Indeed, given a chain complex $\Omega$ it is possible apply iteratively delooping and gaussian elimination and obtain a homotopy equivalent complex with no loops and no isomorphisms. In this paper, we say that the resulting complex is a \emph{reduced form of  $\Omega$}, and the algorithm that allows to find it will be named the \emph{DG algorithm}. \\
\indent
\parpic[r]{\includegraphics[scale=.65]%
{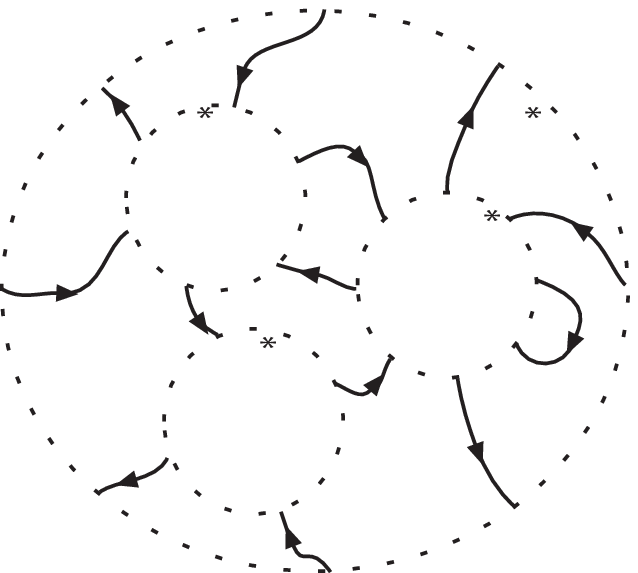}}
 In section \ref{subsec:GravityInformation} we observe that the Khovanov complex of an alternating tangle can be endowed with consistent ``orientations"\footnote{Note that these are orientations of the \emph{smoothings}, and they have nothing to do with the orientations of the components of the tangle itself.}, namely, every strand in every smoothing appearing in the complex can be oriented in a natural way, and likewise every cobordism, in a manner so that these orientations are consistent. (A quick glance at figures \ref{gravicross} on page \pageref{gravicross} and \ref{Fig:smoothing} on page \pageref{Fig:smoothing} should suffice to convince the experts). An important tool for the composition of objects of this type is the concept of \emph{alternating planar algebra}. An alternating planar algebra is an oriented planar algebra as in \cite[Section 5]{Bar1}, where the $d$-input planar arc diagrams $D$ satisfy the following conditions: i) The number $k$ of strings ending on the external boundary of
$D$ is greater than 0. ii) There is complete connection among input discs of the diagram and its arcs, namely, the union of the diagram arcs and the boundary of the internal holes is a connected set. iii) The in- and out-strings alternate in every boundary component of the diagram. A planar arc diagram like this is called a type-$\mathcal{A}$ planar diagram. If $\Phi$ is an element in the planar algebra and $D$ is a $1$-input type-$\mathcal{A}$ planar diagram then $D(\Phi)$ is called a partial closure of $\Phi$.
\indent 
\parpic[r]{\includegraphics[scale=0.9]
{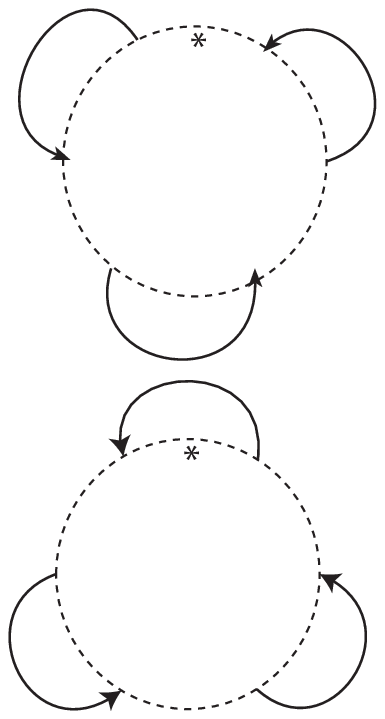}}
Associated with each oriented smoothing $\sigma $ is an integer called the  {\it rotation number} of $\sigma$ which can be determined in the following way. If $\sigma$ has only closed components (loops), then  $R(\sigma)$ is the number of positively oriented (oriented counterclockwise) loops minus the number of negatively oriented (oriented clockwise) loops. The rotation number of an oriented smoothing with boundary is the rotation number of its standard closure, which is obtained by embedding the smoothing in one of the two diagrams on the right, depending on the orientation. Figure \ref{fFig:rotationnumbers} displays several smoothings with their corresponding rotation number. This orientation in the smoothings and the rotation number associated to it were previously utilized in \cite{Bur} to  generalize a Thistlethwaite's result for the Jones polynomial stated in \cite{Thi}. \\

\begin{figure}
        \centering
        \begin{subfigure}[b]{0.3\textwidth}
                \centering
                \includegraphics[scale=1.55]{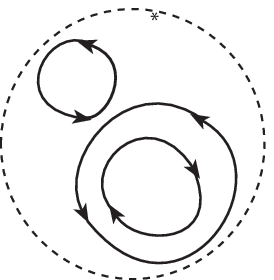}
                \caption{Smoothing with rotation number 1}
                \label{fig:loops}
        \end{subfigure}%
        ~ 
        \begin{subfigure}[b]{0.3\textwidth}
                \centering
                \includegraphics[scale=1.55]{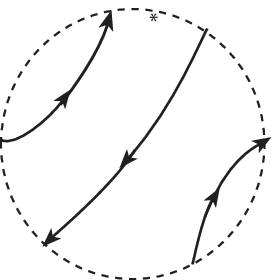}
                \caption{Smoothing with rotation number 2}
                \label{fig:strands}
        \end{subfigure}
        ~ 
        \begin{subfigure}[b]{0.3\textwidth}
                \centering
                \includegraphics[scale=1.55]{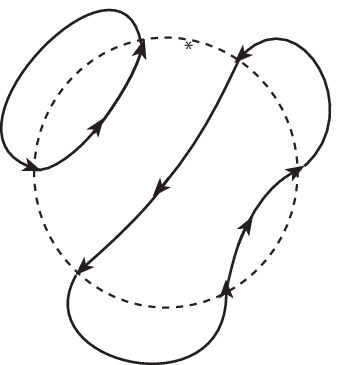}
                \caption{The standard closure of the smoothing in B}
                \label{fig:positiveclosure}
        \end{subfigure}
        \caption{Several smoothings and their rotation numbers. For calculating the rotation number of the smoothing in (B) we count the loops in its positive closure which appears in (C)}\label{fFig:rotationnumbers}
\end{figure}

\indent In a manner similar to \cite{Bar1}, we define a certain graded category $\Cobo$ of {\it oriented
cobordisms}. The objects of $\Cobo$ are {\it oriented smoothing}, and the morphisms are oriented cobordisms. This category is used to define the category  $\Kom(\Mat(\Cobo))$ (abbreviated $\Kobo$) of complexes over $\Mat(\Cobo)$.\\
\indent Specifically, for degree-shifted smoothings
$\sigma\{q\}$, we define $R(\sigma\{q\}): = R(\sigma) + q$. We further use this {\it
degree-shifted rotation number} to define a special class of chain
complexes in $\Kobo$.

\begin{definition}\label{def:diagonal complex} Let $C$ be an integer. A complex $\Omega$ is called \emph{$C$-diagonal} if it is homotopy equivalent to a reduced complex
\[\Omega':\qquad \cdots \longrightarrow \left[\sigma^r_j\right]_j
\longrightarrow \left[\sigma^{r+1}_j\right]_j \longrightarrow \cdots \]
which satisfies that for all degree-shifted smoothings $\sigma^r_j $,  $2r-R(\sigma^r_j)=C$. \end{definition}

\indent In other words, in the reduced form of a diagonal complex, twice the homological
degrees and the degree-shifted rotation numbers of the smoothings always lie along a
single diagonal. The constant $C$ is called
the {\it rotation constant} of $\Omega$. When no confusion arises we only write \emph{diagonal complex} to signify that there exists a constant $C$ such as the complex is $C$-diagonal. Using the above terminology, our main result is stated as follows: 
\begin{theorem}\label{Theorem:MainTheo1} If $T$ is an alternating non-split tangle then there exist a constant $C$ such that the Khovanov homology $Kh(T)$ is $C$-diagonal.
\end{theorem}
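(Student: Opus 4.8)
The plan is an induction over a presentation of $T$ as an iterated alternating planar algebra composition of single crossings. If $T$ is an alternating non-split tangle diagram with $n$ crossings, deleting a small disk around each crossing exhibits $T$ as $D(\mathfrak{X}_{\varepsilon_1},\dots,\mathfrak{X}_{\varepsilon_n})$, where each $\mathfrak{X}_{\varepsilon_i}$ is one of $\overcrossing,\undercrossing$ and $D$ is a planar arc diagram: the alternation of over/under along the strands of $T$ becomes condition (iii) for $D$, non-splitness gives (ii), and (assuming $T$ has boundary points, so $k>0$) condition (i) holds, so $D$ is type-$\mathcal{A}$. Since the Khovanov complex is a morphism of planar algebras (\cite{Bar1}), $Kh(T)=D\bigl(Kh(\mathfrak{X}_{\varepsilon_1}),\dots,Kh(\mathfrak{X}_{\varepsilon_n})\bigr)$, and any type-$\mathcal{A}$ composition factors as a disjoint union followed by a finite sequence of partial closures by $1$-input type-$\mathcal{A}$ diagrams. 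So it suffices to prove: (\textbf{Base}) $Kh(\overcrossing)$ and $Kh(\undercrossing)$ are diagonal; (\textbf{Step}) disjoint union and partial closure by a $1$-input type-$\mathcal{A}$ diagram send diagonal complexes to diagonal complexes. These are exactly the two assertions advertised in the abstract. The $0$-strand case ($T$ a link) falls outside the strict type-$\mathcal{A}$ setting, since then $k=0$; there the statement is the content of Lee's theorem \cite{Lee1}, as explained in Section~\ref{sec:intro}.

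\textbf{Base case.} Each of $Kh(\overcrossing)$ and $Kh(\undercrossing)$ is a two-step complex $[\sigma_0]\longrightarrow[\sigma_1\{1\}]$ with the two resolutions of the crossing in homological degrees $0$ and $1$; it has no loops and no isomorphism entry, hence is already reduced. Endowing $\sigma_0$ and $\sigma_1$ with the consistent orientations of Figure~\ref{gravicross} and computing rotation numbers via standard closures, one checks directly that $2\cdot 0-R(\sigma_0)=2\cdot 1-R(\sigma_1\{1\})$, so both degree-shifted smoothings lie on a single diagonal; this common value is the rotation constant. The over- and under-crossings are treated identically, being related by a reflection that negates homological degrees and rotation numbers compatibly.

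\textbf{Inductive step.} Over a disjoint union, homological degree and rotation number are both additive on smoothings, and the Khovanov complex of a disjoint union is the tensor product of the factors with no reduction needed, so a tensor product of $C_i$-diagonal complexes is $\bigl(\sum_i C_i\bigr)$-diagonal. For a partial closure by a $1$-input type-$\mathcal{A}$ diagram $D$, apply $D$ to each smoothing of a reduced $C$-diagonal model $\Omega'$: homological degrees are literally unchanged, and the rotation number changes by a constant $\kappa(D)$ depending only on $D$. This last point, where conditions (ii) and (iii) are essential, amounts to the affineness of the rotation number under type-$\mathcal{A}$ composition, $R(D(\sigma))=R(\sigma)+\kappa(D)$, which I would deduce from the rotation-number calculus underlying \cite{Bur}: the arcs of $D$ together with the standard closure contribute loops and windings whose total rotation is pinned down by $D$ alone (alternation of endpoints is what makes this wiring ``balanced''), independently of the smoothing glued in. Hence $D(\Omega')$, before reduction, sits on the diagonal $2r-R=C-\kappa(D)$; it remains to pass to a reduced form via the DG algorithm while staying on one diagonal. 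Gaussian elimination is harmless: an isomorphism entry in $\Kobo$ joins two degree-shifted smoothings on the same diagonal (an isomorphism must preserve the relevant gradings), and deleting them, together with the standard correction to the remaining cobordisms — which still connect smoothings on that diagonal — preserves the condition.

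\textbf{The obstacle.} The delicate point, which I expect to be the crux, is delooping. The naive delooping $\bigcirc\rightsquigarrow\varnothing\{{+}1\}\oplus\varnothing\{{-}1\}$ would introduce a summand off the diagonal by $2$: removing, say, a positively oriented loop lowers $R$ by $1$, while the two $q$-shifts change $R$ by $\pm1$, so only the $\{{+}1\}$ summand stays on the diagonal. Resolving this is precisely why the oriented category $\Cobo$ and the ``gravity information'' are introduced: in the alternating setting every loop appearing in the complex carries a definite orientation, and one must show that the delooping in $\Cobo$, read together with this orientation data, preserves both the homological degree and the rotation number — i.e. that a reduced form on a single diagonal can always be chosen. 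I would make this precise by carrying throughout the refinement flagged in the keywords, a \emph{coherently diagonal} complex (a diagonal complex together with the consistent orientations of all its smoothings and cobordisms), proving the Base and Step at that refined level where the compatibility of delooping is built in, and then recovering $C$-diagonality by forgetting orientations. Granting this, an induction on the number of DG moves shows every reduced form of $D(\Omega')$ lies on $2r-R=C-\kappa(D)$, which completes the inductive step and, with the reduction of the first paragraph, the proof of Theorem~\ref{Theorem:MainTheo1}.
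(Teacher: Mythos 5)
Your high-level architecture matches the paper's: present $T$ as a type-$\mathcal{A}$ composition of single crossings, use that $Kh$ is a morphism of (alternating) planar algebras, and reduce to a base case plus a compositional stability statement. But there is a real gap in the way you propose to close the induction, centered on your reading of ``coherently diagonal.''

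In the paper this is a \emph{definition}, not an observation: a complex is coherently diagonal if it is diagonal \emph{and all of its partial closures are diagonal} (Definition~\ref{def:cohdiagonalcomplex}). It is a genuine extra hypothesis, not a consequence of living in $\Cobo$: the paper's $\Omega_3$ is a perfectly good oriented, alternating, diagonal complex whose closure $U_1(\Omega_3)$ is not diagonal. Your gloss of coherent diagonality as ``a diagonal complex together with the consistent orientations of all its smoothings and cobordisms'' therefore names the ambient structure (which every complex in $\Kobo$ already has) rather than the property that actually drives the induction. As a result, the crucial step in your ``obstacle'' paragraph is left unproved: you correctly note that delooping $\bigcirc \rightsquigarrow \varnothing\{+1\}\oplus\varnothing\{-1\}$ places one summand off the diagonal by $2$, and you need that summand to be killable by Gaussian elimination, but nothing in what you write forces the differential hitting (or emanating from) the off-diagonal summand to be invertible. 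That is exactly what can fail (as $\Omega_3$ shows), and exactly what the paper's definition of coherence postulates and then propagates. Your phrase ``Gaussian elimination is harmless: an isomorphism entry \dots joins two degree-shifted smoothings on the same diagonal'' is also backwards for the relevant cancellations: a degree-$0$ isomorphism between objects in adjacent homological degrees necessarily pairs an on-diagonal object with an off-diagonal one (since moving one step in $r$ shifts $2r - R$ by $2$ unless $R$ also moves by $2$); the whole point is that the off-diagonal junk cancels against something, not that it was never there.

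Two further points, more briefly. First, your factorization ``disjoint union, then unary closures'' passes through intermediate stages that are not type-$\mathcal{A}$ (disjoint union violates the connectivity condition~(ii)), so the paper's rotation-number bookkeeping and the definition of coherent diagonality via \emph{unary basic} closures do not straightforwardly apply at those stages; the paper instead factors a type-$\mathcal{A}$ diagram through \emph{binary basic} operators (one connecting arc each, Proposition~\ref{prop:compplanar}), which stay connected at every step. Second, when you do compose two complexes with a binary operator, the result is a double complex, and reducing it column-by-column changes the horizontal differentials and spawns longer-range arrows; the paper introduces perturbed double complexes (Section~\ref{URC}) precisely to justify that the DG algorithm can be run vertical-slice-by-vertical-slice and still land in the right class, feeding into Lemma~\ref{Lemma:2InputDiagonalComplex}. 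Your ``induction on the number of DG moves'' is silently relying on something like this without saying why the moves do not interfere across the two gradings. Filling these gaps is essentially redoing Sections~\ref{sec:On-Diagonal}--\ref{sec:Theorem2} of the paper.
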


Roughly speaking, we say that a complex $\Omega$ is {\it coherently diagonal} (precise meaning is stated in definition \ref{def:cohdiagonalcomplex}) if it is a  diagonal complex whose partial closures 
are also diagonal. As every partial closure of an alternating tangle is again an alternating tangle, Theorem \ref{Theorem:MainTheo1} can be strengthened to say that the Khovanov homology $Kh(T)$ of a non-split alternating tangle is coherently diagonal.  To prove the stronger version of the theorem we use the fact that non-split alternating tangles form an alternating planar algebra  generated by the one-crossing tangles $(\overcrossing)$ and $(\undercrossing)$. Thus Theorem \ref{Theorem:MainTheo1} follows from the observation that $Kh(\overcrossing)$ and $Kh(\undercrossing)$ are coherently diagonal and from Theorem \ref{Theorem:MainTheo2} below:

\begin{theorem}\label{Theorem:MainTheo2} If $\Omega_1,\ldots,\Omega_n$ are coherently diagonal complexes and $D$ is an alternating planar diagram then $D(\Omega_1,\ldots,\Omega_n)$ is coherently diagonal
\end{theorem}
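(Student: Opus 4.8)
The plan is to reduce the statement about a general alternating planar diagram $D$ to the case of a single elementary composition, and then to analyze how the rotation number and homological degree transform under that elementary move. First I would recall that any alternating planar arc diagram can be built from elementary ones — either by "stacking" (gluing one input disc of a type-$\mathcal{A}$ diagram into another) or by a disjoint-union-then-cap operation — so that, using associativity of planar composition and the fact (asserted in the excerpt) that partial closures of alternating tangles are again alternating, it suffices to prove the theorem when $D$ is a $1$-input type-$\mathcal{A}$ diagram; the multi-input case follows by composing the $\Omega_i$ one at a time into a growing alternating tangle and invoking the coherence hypothesis at each stage. So the heart of the matter is: if $\Omega$ is coherently diagonal and $D$ is a $1$-input type-$\mathcal{A}$ planar diagram, then $D(\Omega)$ is coherently diagonal.

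For that core step, I would work with a reduced form $\Omega'$ of $\Omega$, in which every degree-shifted smoothing $\sigma^r_j$ satisfies $2r - R(\sigma^r_j) = C$. Applying $D$ to $\Omega'$ object-by-object produces a complex $D(\Omega')$ whose smoothings are the $D(\sigma^r_j)$; crucially, $D(\Omega')$ represents $D(\Omega)$ up to homotopy because $D$, as a planar-algebra operation, is a functor and respects homotopy equivalence. The two things to check are: (i) the homological degree is unchanged, $D(\sigma^r_j)$ sits in degree $r$; and (ii) the rotation number changes by a \emph{fixed} amount $\delta(D)$ depending only on $D$ and not on the particular smoothing — this is because capping off or routing strands through the fixed arcs of $D$ adds a controlled number of clockwise/counterclockwise turns that is dictated purely by the combinatorics of $D$ together with the in/out alternation condition (iii) in the definition of type-$\mathcal{A}$, which guarantees the endpoints on each boundary component match up compatibly with the ambient orientation. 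Granting these, $2r - R(D(\sigma^r_j)) = 2r - R(\sigma^r_j) - \delta(D) = C - \delta(D)$, so $D(\Omega')$ is already on a single diagonal. One must still argue that $D(\Omega')$, though loop-free by construction of the gluing (no new closed components appear in a \emph{type-}$\mathcal{A}$ composition precisely because of condition (ii), connectivity), contains no new isomorphism entries in its differentials; if it does, delooping/Gaussian elimination via the DG algorithm removes them, and I would check that these reductions preserve the diagonal property — this is essentially the same bookkeeping Bar-Natan uses, since delooping shifts $q$ by $\pm 1$ and correspondingly shifts $R$ by $\mp 1$, leaving $2r - R$ fixed, while Gaussian elimination only deletes objects.

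Finally, to upgrade "$D(\Omega)$ is diagonal" to "$D(\Omega)$ is coherently diagonal" I would take an arbitrary $1$-input type-$\mathcal{A}$ diagram $D'$ and form $D'(D(\Omega)) = (D' \circ D)(\Omega)$; since the composition $D' \circ D$ of type-$\mathcal{A}$ diagrams is again a type-$\mathcal{A}$ diagram (this uses conditions (i)–(iii) being closed under composition), the already-established diagonal case applied to $D' \circ D$ shows the partial closure is diagonal, which is exactly coherence. I expect the main obstacle to be step (ii) above: proving that $\delta(D)$ is genuinely independent of the smoothing. A priori the rotation number is a global quantity, and different smoothings $\sigma^r_j$ route their strands differently through $D$; the argument must show that the \emph{difference} $R(D(\sigma)) - R(\sigma)$ is local to $D$ — plausibly by a Gauss–Bonnet / turning-number additivity argument in which the total turning splits as a sum over the strands inside $D$ (fixed) and the strands inside the input disc (which reassemble, via the alternation condition, into exactly the standard closure that defines $R(\sigma)$), with the correction term being a boundary contribution determined by $D$ alone. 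Pinning this down rigorously — likely via the "gravity information" / consistent-orientation framework of section \ref{subsec:GravityInformation} — is where the real work lies.
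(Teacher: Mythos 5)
Your proposal handles the unary (one-input) case soundly: you correctly note that partial closures again yield alternating data, that the rotation number changes by a fixed amount $\delta(D)$ depending only on $D$ (this is Proposition~\ref{prop:AsoRotNumber} in the paper), and that delooping shifts $q$ by $\pm 1$ and $R$ by $\mp 1$ so that $2r - R$ is preserved, while Gaussian elimination only deletes objects. But there is a genuine gap at the heart of your reduction: the claim that ``the multi-input case follows by composing the $\Omega_i$ one at a time into a growing alternating tangle and invoking the coherence hypothesis at each stage'' does not go through, and the difficulty it sweeps under the rug is precisely where the paper does its real work.

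The coherence hypothesis concerns only \emph{partial closures}, i.e.\ compositions of \emph{unary} basic operators; it says nothing about what happens when you glue two of the $\Omega_i$ to each other through a \emph{binary} operator. You cannot sidestep this by a ``disjoint-union-then-cap'' device: the disjoint-union diagram is not type-$\mathcal{A}$ (condition (ii), connectivity, fails), and more seriously, the disjoint union of two complexes is a \emph{double complex}, not a single complex to which coherence applies. When you subsequently cap and run the DG algorithm, the double-complex grid gets disturbed, and you need to control that disturbance. The paper resolves exactly this by decomposing an arbitrary alternating $D$ into unary and binary \emph{basic operators} (Proposition~\ref{prop:compplanar}), by introducing \emph{perturbed double complexes} (Section~\ref{URC}) to show that applying delooping and Gaussian elimination in one vertical direction of a double complex stays within the perturbed-double-complex class without affecting the other vertical chains, and then proving the binary case in Lemma~\ref{Lemma:2InputDiagonalComplex} by reducing each vertical chain $\Omega_{\bullet,q}$ separately. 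That lemma --- not the unary case --- is the engine of the proof, and your proposal contains no analogue of it. To repair the argument you would need either a direct proof that $D(\Omega_1,\Omega_2)$ is (coherently) diagonal for a basic binary $D$, controlling the reduction of the resulting double complex, or some other mechanism that avoids binary gluings entirely, which the planar-algebra generation result does not permit.

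Two smaller remarks: your assertion that ``no new closed components appear in a type-$\mathcal{A}$ composition precisely because of condition (ii), connectivity'' is false --- closed loops routinely form when a smoothing is inserted into a closure diagram (see Example~\ref{Ex:cohdiagonal}), so delooping is essential, not optional; you do hedge this, but the stated reason is wrong. And your Gauss--Bonnet heuristic for the existence of $\delta(D)$ is an attractive alternative intuition, but the paper establishes the associated rotation number $R_D$ more simply, by tracking its change through each basic operator after decomposing $D$.
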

In the case of alternating tangles with no boundary, i.e., in the case of alternating links,
Theorem \ref{Theorem:MainTheo1}
 reduces to Lee's theorem on the Khovanov homology of
alternating links.\\
\indent The work is organized as follows.  Section
\ref{sec:QuickReview} reviews the local Khovanov theory of \cite{Bar1}. Section \ref{sec:Alternating} is devoted to introducing the category $\Cobo$ and gives a quick review of some concepts related to
 alternating planar algebras. In particular we review the concepts of rotation numbers, alternating planar diagrams, associated rotation numbers, and basic operators.\\
\indent Section \ref{sec:On-Diagonal}  presents examples of diagonal and non-diagonal complexes, Introduces the concept of coherently diagonal complexes, and  states some results about the complexes obtained when a basic operator is applied to diagonal complexes. When applied to the composition of two tangles, the Khovanov homology is formed from a double complex. However, when applying the DG algorithm the structure of double complex is lost. Attempting to fix this problem in Section \ref{URC} we introduced the concept of perturbed double complex which is the object in which the DG algorithm lives. Indeed, we shall see that double complexes are special cases of perturbed double complexes and that after applying any step of the DG algorithm in a perturbed double complex, the complex continues being of the same class.  The application of the DG algorithm leads to the proof in section \ref{sec:Theorem2}  of Theorem \ref{Theorem:MainTheo2}. Finally section
  \ref{sec:LeeTheorem} is dedicated to the study of non-split alternating tangles. Here, we prove Theorem \ref{Theorem:MainTheo1} and derive from it the Lee's Theorem formulated in \cite{Lee1}.\\
  
  \section{Acknowledgement} We wish to thank our referee for some helpful comments.
  

\section{The local Khovanov theory: Notation and some details}
\label{sec:QuickReview} \indent The notation and some results
 appearing here are treated in more details in~\cite{Bar1,Bar2,Naot}. Given a set
$B$ of $2k$ marked points on a based circle $C$, a smoothing with
boundary $B$ is a union of strings $a_1,...,a_n$ embedded in the
planar disk for which $C$ is the boundary, such that
$\cup_{i=1}^n\partial a_i=B$. These strings are either closed
curves, {\it loops}, or strings whose boundaries are points on $B$,
{\it
strands}. If $B=\emptyset$, the smoothing is a union of circles.\\
\indent 
\parpic[r]{\includegraphics[scale=1]%
{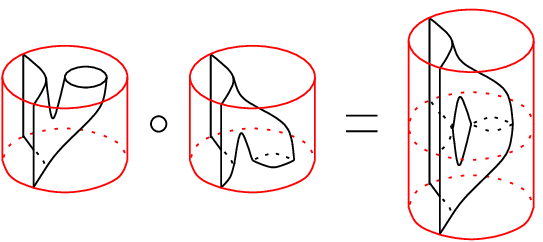}} We denote by $Cob^3(B)$ the category whose objects are
smoothings with boundary $B$, and whose morphisms are cobordisms
between such smoothings, regarded up to boundary preserving isotopy.
The composition
of morphisms is given by placing one cobordism atop the other. \\

As in \cite[Section 11.2]{Bar1}, $\Cobdl^3(B)$ denotes the extension of $Cob^3(B)$ where ``dots" are allowed, and whose morphisms are considered modulo the
local relations:
\begin{equation} \label{eq:LocalRelations}
\begin{array}{c}
  \begin{array}{c}
    \includegraphics[height=1cm]{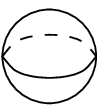}
  \end{array}\hspace{-2mm}=0,
  \qquad\qquad
  \begin{array}{c}
    \includegraphics[height=1cm]{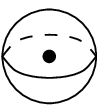}
  \end{array}\hspace{-2mm}=1,
  \qquad\qquad
  \begin{array}{c}\includegraphics[height=10mm]{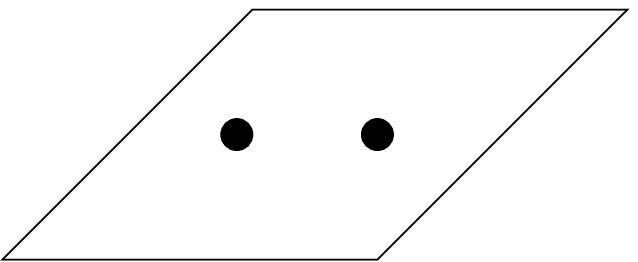}\end{array}
  \hspace{-4mm}=0,
\\
  \text{and}\qquad
  \begin{array}{c}\includegraphics[height=10mm]{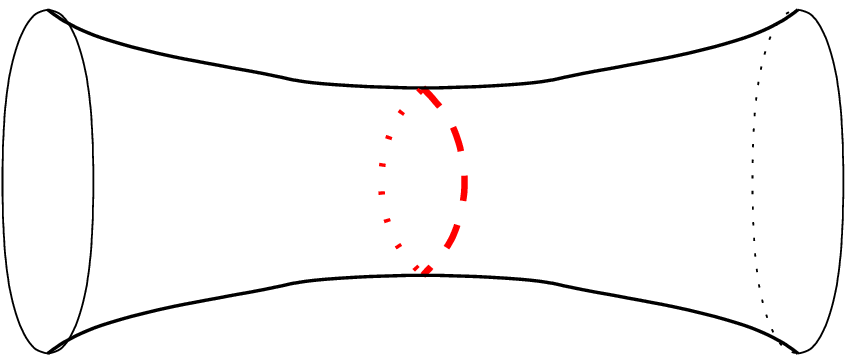}\end{array}
  =\begin{array}{c}\includegraphics[height=10mm]{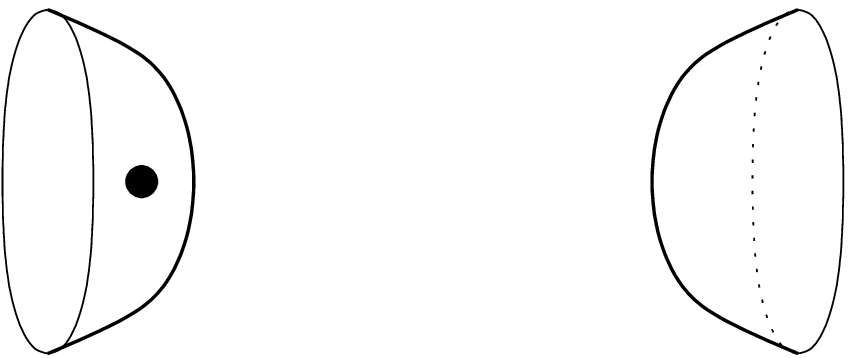}\end{array}
  +\begin{array}{c}\includegraphics[height=10mm]{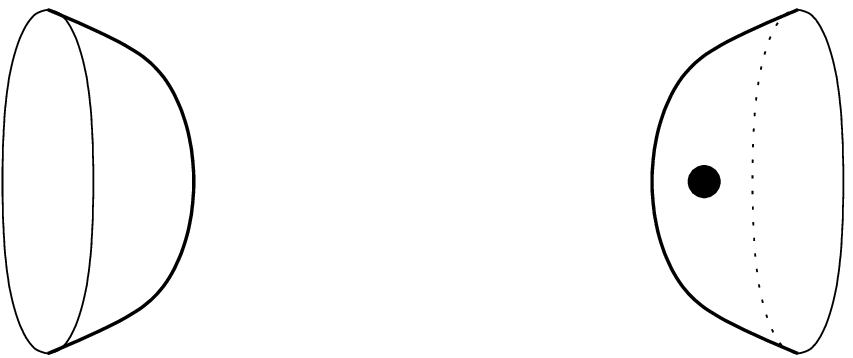}\end{array}.
\end{array}
\end{equation}
We will use the notation $\Cob^3$ and $\Cobdl^3$ as a generic
reference, namely, $\Cob^3=\bigcup_B\Cob^3(B)$ and
$\Cobdl^3=\bigcup_B\Cobdl^3(B)$. If $B$ has $2k$ elements, we usually
write  $\Cobdl^3(k)$ instead of $\Cobdl^3(B)$. If $\mathcal{C}$ is
any category, $\operatorname{Mat}(\mathcal{C})$
 will be the additive category whose objects are column vectors (formal direct sums) whose entries are objects of $\mathcal{C}$. Given two objects
in this category,
\begin{align*} {\mathcal O}=
 \left( \begin{tabular}{c}
  ${\mathcal O}_{1}$ \\ ${\mathcal O}_{2}$\\ \vdots \\ ${\mathcal O}_{n}$  \end{tabular} \right) \hspace{1cm} &{\mathcal O}^1= \left( \begin{tabular}{c}
  ${\mathcal O}_{1}^1$ \\ ${\mathcal O}_{2}^1$ \\ \vdots \\ ${\mathcal O}_{m}^1$  \end{tabular}
  \right),\end{align*}
  the morphisms between these objects will be matrices whose entries are formal sums of morphisms between them.
   The morphisms in this additive category are added using the
  usual matrix addition and the morphism composition is modelled by matrix
  multiplication, i.e, given two appropriate morphisms $F=(F_{ik})$ and $G=(G_{kj})$ between objects of
this category, then $F\circ G$ is given by
\[
F\circ  G= \sum_k F_{ik} G_{kj}\, .
\]

  $\operatorname{Kom}(\mathcal{C})$ will be the category of formal
  complexes
 over an additive category $\mathcal{C}$. $\operatorname{Kom}_{/h}(\mathcal{C})$ is
 $\operatorname{Kom}(\mathcal{C})$ modulo homotopy. We also use the abbreviations
 $\Kob(k)$ and $\Kobh(k)$ for denoting $\Kom(\Mat(\Cobdl^3(k)))$ and $\Komh(\Mat(\Cobdl^3(k)))$.\\
 \indent
Objects and morphisms of the categories $\Cob^3$, $\Cobdl^3$,
$\operatorname{Mat}(\Cobdl^3)$,
$\Kob(k)$, and
$\Kobh(k)$  can be seen
as examples of planar algebras, i.e., if $D$ is a $n$-input planar
diagram, it defines an operation among elements of the previously mentioned
collections. See \cite{Bar1} for specifics of how $D$ defines
operations in each of these collections. In particular, if
$(\Omega_i, d_i)\in\Kob(k_i)$ are complexes,  the complex
$(\Omega,d)=D(\Omega_1,\dots,\Omega_n)$ is defined by
\begin{equation} \label{eq:KobPA}
\begin{split}
    \Omega^r & :=
    \bigoplus_{r=r_1+\dots+r_n} D(\Omega_1^{r_1},\dots,\Omega_n^{r_n})
  \\
    \left.d\right|_{D(\Omega_1^{r_1},\dots,\Omega_n^{r_n})} & :=
    \sum_{i=1}^n (-1)^{\sum_{j<i}r_j}
      D(I_{\Omega_1^{r_1}},\dots,d_i,\dots,I_{\Omega_n^{r_n}}),
\end{split}
\end{equation}
$D(\Omega_1,\dots,\Omega_n)$ is used here as an abbreviation of  $D((\Omega_1,d_1),\dots,(\Omega_n,d_n))$.\\
\indent In \cite{Bar1}
the following very desirable property is also proven. The Khovanov
homology is a planar algebra morphism between the planar algebras
$\mathcal{T}(s)$ of oriented tangles and
$\Kobh(k)$. That is to say,
for an $n$-input planar diagram $D$, and suitable tangles
$T_1,...,T_n$, we have

\begin{equation}\label{eq:planarmorphism}Kh(D(T_1,...,T_n))\cong D(Kh(T_1),...,Kh(T_n)).\end{equation}\\
 \indent  This last property is used in \cite{Bar2} to show a local algorithm for computing the Khovanov homology of a
 link. In that paper, it is explained how it is possible to remove the loops in the smoothings, and some terms in the Khovanov complex $Kh(T_i)$ associated to the local tangles $T_1,...,T_n$, and then combine them together in an $n$-input planar diagram $D$  obtaining
  $D(Kh(T_1),...,Kh(T_n))$, and the Khovanov homology of the original
  tangle.\\
  \indent The elimination of loops and terms can be done thanks to the
  following: Lemma 4.1 and Lemma 4.2 in \cite{Bar2}. We copy these lemmas
  verbatim:
\begin{lemma} \label{lem:Delooping} (Delooping) If an object $S$ in $\Cobdl^3$
contains a closed loop $\ell$, then it is isomorphic (in
$\Mat(\Cobdl^3)$) to the direct sum of two copies $S'\{+1\}$ and
$S'\{-1\}$ of $S$ in which $\ell$ is removed, one taken with a
degree shift of $+1$ and one with a degree shift of $-1$.
Symbolically, this reads
$\bigcirc\equiv\emptyset\{+1\}\oplus\emptyset\{-1\}$.
\end{lemma}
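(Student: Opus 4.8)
The plan is to exhibit an explicit pair of mutually inverse morphisms realizing the isomorphism and to verify the inverse relations using nothing but the local relations \eqref{eq:LocalRelations}. Let $S'$ denote the smoothing obtained from $S$ by deleting $\ell$, so that near $\ell$ the object $S$ is just a disjoint circle sitting next to the rest of $S'$. First I would write down a morphism $\Phi\colon S \to S'\{+1\}\oplus S'\{-1\}$ as a column of two cobordisms and a morphism $\Psi\colon S'\{+1\}\oplus S'\{-1\}\to S$ as a row of two cobordisms,
\[
\Phi=\binom{\phi_1}{\phi_2},\qquad \Psi=\begin{pmatrix}\psi_1 & \psi_2\end{pmatrix},
\]
all four of which equal the identity cobordism outside a small disk around $\ell$; inside that disk $\phi_1$ caps $\ell$ with a once-dotted disk, $\phi_2$ caps $\ell$ with an undotted disk, $\psi_1$ creates $\ell$ from an undotted disk, and $\psi_2$ creates $\ell$ from a once-dotted disk.

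Second, I would do the degree bookkeeping: in $\Cobdl^3$ an undotted cap or cup has degree $+1$ and each dot lowers degree by $2$, so $\phi_1$ and $\psi_2$ have degree $-1$ while $\phi_2$ and $\psi_1$ have degree $+1$. With the degree-shift conventions of $\Cobdl^3$ this is exactly what is needed for each of $\phi_1,\psi_1$ to be a degree-$0$ morphism into/out of the $\{+1\}$-summand and each of $\phi_2,\psi_2$ to be a degree-$0$ morphism into/out of the $\{-1\}$-summand; in particular it is this count that forces the dotted cap to go with the $\{+1\}$ copy of $S'$ and the undotted cap with the $\{-1\}$ copy.

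Third come the two checks. For $\Psi\Phi=\mathrm{Id}_S$: away from $\ell$ both composites are the identity cylinder on $S'$, while near $\ell$ we get $\psi_1\phi_1+\psi_2\phi_2$, which is ``the tube on $\ell$, cut open, with a dot on the lower disk'' plus ``the same with a dot on the upper disk'' --- that is precisely the right-hand side of the neck-cutting relation in \eqref{eq:LocalRelations}, hence it equals the uncut tube, i.e.\ $\mathrm{Id}_\ell$. For $\Phi\Psi=\mathrm{Id}_{S'\{+1\}\oplus S'\{-1\}}$: this is the $2\times2$ matrix with entries $\phi_i\psi_j$, and near $\ell$ each $\phi_i\psi_j$ is a closed sphere carrying $0$, $1$, $2$, or $1$ dots for $(i,j)=(2,1),(1,1),(1,2),(2,2)$ respectively; the relations ``sphere $=0$'', ``dotted sphere $=1$'', ``two-dotted sphere $=0$'' then make this matrix $\mathrm{diag}(1,1)$, the identity.

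I do not expect a genuine obstacle here --- the entire content of the lemma is a bookkeeping repackaging of \eqref{eq:LocalRelations}, with neck-cutting supplying one inverse relation and the three sphere evaluations supplying the other. The one place to be careful is the grading computation in the second step: it has to be carried out with the specific Euler-characteristic-minus-dots grading and the specific $\{\,\cdot\,\}$ shift convention of $\Cobdl^3$ in order to conclude that the two summands are $S'\{+1\}$ and $S'\{-1\}$ rather than some other pair of shifts. It is also worth recording that, since all four cobordisms are the identity outside a neighbourhood of $\ell$, the isomorphism is compatible with every morphism of $S$ supported away from $\ell$; this naturality is exactly what will let delooping be applied term by term inside a complex later on.
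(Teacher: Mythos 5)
Your proposal is correct and takes essentially the same approach as the paper, which simply displays the pair of mutually inverse morphisms (dotted/undotted caps and cups) in a figure (equation~\eqref{eq:Delooping}) and notes that all four local relations in~\eqref{eq:LocalRelations} are used to verify the two composites. You have spelled out in words exactly what that figure shows, including the degree bookkeeping and the neck-cutting/sphere-evaluation checks, so there is no divergence from the paper's argument.
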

The isomorphisms for the proof can be seen in:
\begin{equation} \label{eq:Delooping}
  \begin{array}{c}
    \includegraphics[height=1.0in]{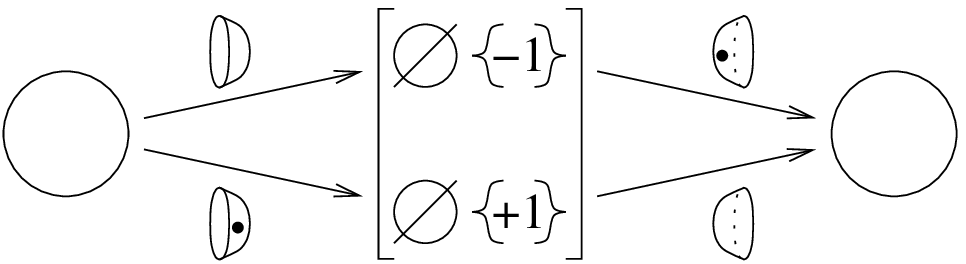}
  \end{array}
\end{equation} using all the relations in (\ref{eq:LocalRelations}).
\begin{lemma} \label{lem:GaussianElimination} (Gaussian elimination, made
abstract) If $\phi:b_1\to b_2$ is an isomorphism (in some additive
category $\calC$), then the four term complex segment in
$\Mat(\calC)$
\begin{equation} \label{eq:BeforeGE}
  \xymatrix@C=2cm{
    \cdots\
    \left[C\right]
    \ar[r]^{\begin{pmatrix}\alpha \\ \beta\end{pmatrix}} &
    {\begin{bmatrix}b_1 \\ D\end{bmatrix}}
    \ar[r]^{\begin{pmatrix}
      \phi & \delta \\ \gamma & \epsilon
    \end{pmatrix}} &
    {\begin{bmatrix}b_2 \\ E\end{bmatrix}}
    \ar[r]^{\begin{pmatrix} \mu & \nu \end{pmatrix}} &
    \left[F\right] \  \cdots
  }
\end{equation}
is isomorphic to the (direct sum) complex segment
\begin{equation} \label{eq:DuringGE}
  \xymatrix@C=3cm{
    \cdots\
    \left[C\right]
    \ar[r]^{\begin{pmatrix}0 \\ \beta\end{pmatrix}} &
    {\begin{bmatrix}b_1 \\ D\end{bmatrix}}
    \ar[r]^{\begin{pmatrix}
      \phi & 0 \\ 0 & \epsilon-\gamma\phi^{-1}\delta
    \end{pmatrix}} &
    {\begin{bmatrix}b_2 \\ E\end{bmatrix}}
    \ar[r]^{\begin{pmatrix} 0 & \nu \end{pmatrix}} &
    \left[F\right] \  \cdots
  }.
\end{equation}
Both these complexes are homotopy equivalent to the (simpler)
complex segment
\begin{equation} \label{eq:AfterGE}
  \xymatrix@C=3cm{
    \cdots\
    \left[C\right]
    \ar[r]^{\left(\beta\right)} &
    {\left[D\right]}
    \ar[r]^{\left(\epsilon-\gamma\phi^{-1}\delta\right)} &
    {\left[E\right]}
    \ar[r]^{\left(\nu\right)} &
    \left[F\right] \  \cdots
  }.
\end{equation}
Here $C$, $D$, $E$ and $F$ are arbitrary columns of objects in
$\calC$ and all Greek letters (other than $\phi$) represent
arbitrary matrices of morphisms in $\calC$ (having the appropriate
dimensions, domains and ranges); all matrices appearing in these
complexes are block-matrices with blocks as specified. $b_1$ and
$b_2$ are billed here as individual objects of $\calC$, but they can
equally well be taken to be columns of objects provided (the
morphism matrix) $\phi$ remains invertible.
\end{lemma}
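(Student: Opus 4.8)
The plan is to prove the three claims of the lemma in order: first the isomorphism of complexes \eqref{eq:BeforeGE} $\cong$ \eqref{eq:DuringGE}, then the homotopy equivalence \eqref{eq:DuringGE} $\simeq$ \eqref{eq:AfterGE}; the assertion that \eqref{eq:BeforeGE} is also homotopy equivalent to \eqref{eq:AfterGE} then follows by composition. The closing remark — that $b_1$ and $b_2$ may be taken to be columns of objects — will need no change in the argument, since at no point do we use that they are single objects; we only use that $\phi$ is invertible, so that $\phi^{-1}$ makes sense.

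For \eqref{eq:BeforeGE} $\cong$ \eqref{eq:DuringGE} I would exhibit the chain map that is the identity on $[C]$ and $[F]$ and on the two middle terms is given by the block-triangular, hence invertible, matrices
\[
f=\begin{pmatrix}1 & \phi^{-1}\delta\\ 0 & 1\end{pmatrix}\colon
\begin{bmatrix}b_1\\ D\end{bmatrix}\longrightarrow\begin{bmatrix}b_1\\ D\end{bmatrix},
\qquad
g=\begin{pmatrix}1 & 0\\ -\gamma\phi^{-1} & 1\end{pmatrix}\colon
\begin{bmatrix}b_2\\ E\end{bmatrix}\longrightarrow\begin{bmatrix}b_2\\ E\end{bmatrix}.
\]
A routine block multiplication gives $\begin{pmatrix}\phi & 0\\ 0 & \epsilon-\gamma\phi^{-1}\delta\end{pmatrix}f=g\begin{pmatrix}\phi & \delta\\ \gamma & \epsilon\end{pmatrix}$, so the middle square commutes. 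For the two outer squares one must use the complex relations of \eqref{eq:BeforeGE}: from $\begin{pmatrix}\phi & \delta\\ \gamma & \epsilon\end{pmatrix}\binom{\alpha}{\beta}=0$ one gets $\alpha=-\phi^{-1}\delta\beta$, whence $f\binom{\alpha}{\beta}=\binom{0}{\beta}$; dually, $(\mu\ \nu)\begin{pmatrix}\phi & \delta\\ \gamma & \epsilon\end{pmatrix}=0$ gives $\mu=-\nu\gamma\phi^{-1}$, whence $(0\ \nu)g=(\mu\ \nu)$. All squares lying outside the displayed four-term segment commute automatically, because $f$ and $g$ meet those squares through the identity maps on $[C]$ and $[F]$ and the chain groups and differentials there are untouched. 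Since $f$ and $g$ are invertible, this is a chain isomorphism.

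For \eqref{eq:DuringGE} $\simeq$ \eqref{eq:AfterGE}, I would observe that in \eqref{eq:DuringGE} every block of a differential connecting the $b_1$- or $b_2$-summand to any of the $C$-, $D$-, $E$-, $F$-summands vanishes; hence \eqref{eq:DuringGE} splits as the direct sum of the two-term complex $\bigl(\cdots\to 0\to [b_1]\xrightarrow{\ \phi\ }[b_2]\to 0\to\cdots\bigr)$ and the complex \eqref{eq:AfterGE}. As $\phi$ is an isomorphism the first summand is contractible, with contracting homotopy $\phi^{-1}\colon[b_2]\to[b_1]$ (and zero elsewhere), so it is homotopy equivalent to the zero complex; therefore \eqref{eq:DuringGE} is homotopy equivalent to \eqref{eq:AfterGE}, and composing with the previous step yields the same for \eqref{eq:BeforeGE}.

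I do not expect a genuine obstacle; this is essentially the computation of \cite{Bar2}. The one place that is easy to get wrong is the check of the two outer squares — a naive column/row reduction of the middle differential alone would leave a residual $\alpha$ (respectively $\mu$), and one must invoke the $d^2=0$ relations to see that it cancels. It is also worth recording the explicit homotopies realizing the equivalences — the identity on the $[C],[D],[E],[F]$ part together with $\phi^{-1}$ on the acyclic summand — since those concrete formulas, not merely the homotopy-type statement, are what the DG algorithm manipulates later in the paper.
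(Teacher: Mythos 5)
The paper does not prove this lemma: it explicitly says it is copying the statement verbatim from \cite{Bar2} (Lemma~4.2 there), so there is no ``paper's own proof'' to compare against. Your argument is nonetheless correct and is the standard one (and, for what it is worth, matches the proof given in \cite{Bar2}). The conjugation by the block-unitriangular maps
\(
f=\left(\begin{smallmatrix}1 & \phi^{-1}\delta\\ 0 & 1\end{smallmatrix}\right)
\)
and
\(
g=\left(\begin{smallmatrix}1 & 0\\ -\gamma\phi^{-1} & 1\end{smallmatrix}\right)
\)
decouples the middle differential, your invocation of the two relations $\phi\alpha+\delta\beta=0$ and $\mu\phi+\nu\gamma=0$ coming from $d^2=0$ correctly kills the residual $\alpha$ and $\mu$ in the outer squares, and once the $b_1,b_2$ columns are disconnected from everything else the complex visibly splits off the contractible summand $[b_1]\xrightarrow{\phi}[b_2]$ with contracting homotopy $\phi^{-1}$. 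You are also right that nothing in the argument uses that $b_1,b_2$ are single objects rather than columns, only that $\phi$ is invertible, and your remark about recording the explicit homotopy (the identity away from $b_1,b_2$, and $\pm\phi^{-1}$ on the acyclic summand) is well taken, since those explicit formulas are exactly what gets reused in Section~\ref{URC} when tracking how the $d^i$ transform under Gaussian elimination.
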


 \indent From the previous lemmas we infer that the
Khovanov complex of a tangle is homotopy equivalent to a chain
complex without loops in the smoothings, and in which every
differential is a non-invertible cobordism. In other words, if $(\Omega,d)$ is a complex in $\Cobdl^3$, we can use lemmas \ref{lem:Delooping}, \ref{lem:GaussianElimination}, and obtain a homotopy equivalent chain complex $(\Omega',d')$ with no loop in its smoothings and no invertible cobordism in its differentials. We say that a complex that has these two properties is \emph{reduced}. Moreover, we call $(\Omega',d')$   a {\it reduced form} of $(\Omega,d)$ .\\


\section{The category $\Kobo$ and alternating planar algebras}  \label{sec:Alternating}
 In this section we  introduce an alternating orientation in the objects of $\Cobdl^3(k)$. This orientation induces an orientation in the cobordisms of this category. These oriented $k$-strand smoothings and cobordisms form the objects and morphisms in
a new category. The composition between cobordisms in
this oriented category is defined in the standard way, and it is regarded as a
graded category, in the sense of \cite[Section 6]{Bar1}. We mod out the
cobordisms in this oriented category by the relations in (\ref{eq:LocalRelations})
and denote it as $\Cobo(k)$. Now we can follow \cite{Bar1} and define sequentially the categories $\Mat(\Cobo(k))$, $\Kom(\Mat(\Cobo(k)))$ and  $\Komh(\Mat(\Cobo(k)))$. These last two categories are what we denote $\Kobo(k)$, and $\Koboh$. As usual, we use $\Kobo$, and $\Koboh$, to denote $\bigcup_k \Kobo(k)$ and $\bigcup_k \Koboh(k)$ respectively.\\

The orientation of the smoothings is done in such a way that the orientation of the strands is alternating in the boundary of the disc where they are embedded. 
After removing loops, the resulting collection of alternating oriented smoothings obtained will be denoted by the symbol $\calSo$. A $d$-input
planar diagram with an alternating orientation of its arcs  provides a good tool for the
horizontal composition of objects in $\calSo$. Given smoothings
$\sigma_1,...,\sigma_d$, a suitable alternating $d$-input planar diagram $D$ to
compose them has the property that the $i$-th input disc has as many
boundary arc points as $\sigma_i$. Moreover placing $\sigma_i$ in the $i$-th input
disc, the orientation of $\sigma_i$  and $D$ match. An alternatively oriented $d$-input
planar diagram as this  provides a good tool for the
horizontal composition of objects not only in $\calSo$, but also in $\Cobo$, $\Mat(\Cobo(k))$, $\Kobo$, and $\Koboh$.\\

 We are going to use these alternating diagrams to compose non-split alternating tangles, and we want to preserve the non-split property of the tangle. Hence, it will be better if we use $d$-input type $\calA$ diagrams.
A $d$-input type-$\mathcal{A}$ diagram  has an even number of
strings ending in each of its boundary components, and by condition (ii) there is complete connection among input discs of the diagram and its arcs, so every string
that begins in the external boundary ends in a boundary of an
internal disk. We can classify the strings as:
{\it curls}, if they have their ends in the same input disc; {\it interconnecting arcs}, if their ends are in
different input discs, and {\it boundary arcs}, if they have one end in an input disc and the other in the external boundary
of the output disc. The arcs and the boundaries of the discs divide the surface of the diagram into disjoint regions. Diagrams with only one or two input discs deserve special attention. Operators defined from diagrams like these are very important for our purposes since some of them are considered as the generators of the entire collection of operators in an alternating planar algebra.
\begin{figure}[th] \centering
\includegraphics[scale=.6]%
{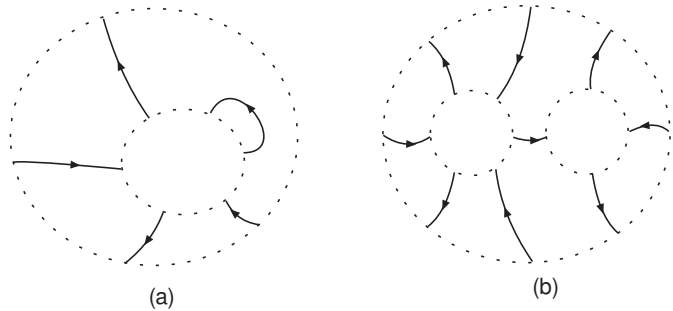}%
\caption{Examples of basic planar diagrams}\label{Fig:bplanar}
\end{figure}
\begin{definition} A basic planar diagram (See Figure \ref{Fig:bplanar}) is a 1-input alternating planar
diagram with a curl in it,  or a 2-input alternating planar diagram with only one interconnecting arc. A basic operator is one defined from a basic planar diagram.
A negative unary basic operator is one defined from a basic 1-input diagram where the curl completes a negative loop. A positive unary basic operator is one defined from a basic 1-input diagram where the curl completes a positive loop. A binary basic operator is one defined from a basic 2-input planar diagram.
\end{definition}

\begin{proposition}\label{prop:compplanar} Any operator $D$ in an alternating oriented planar algebra is the finite composition of basic operators. 
\end{proposition}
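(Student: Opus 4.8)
The plan is to prove the statement by induction on the number $d$ of input discs of $D$, peeling off one basic operator at each step and recording it as the first (or last) factor of the composition.

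I would begin with the base case $d=1$, arguing by a secondary induction on the number of curls of $D$. If a $1$-input type-$\mathcal{A}$ diagram has no curls, then every one of its strings runs from the inner disc to the outer boundary, so, after identifying the two (equinumerous) sets of boundary points, $D$ is the identity operator, which I regard as the empty composition of basic operators. If $D$ has a curl I would choose an \emph{innermost} one $\kappa$ --- one that bounds, together with an arc of the inner boundary, a subdisc of the diagram meeting no other string --- and take a small disc $\delta$ encircling the inner disc together with $\kappa$. Cutting along $\delta$ exhibits $D=D_1\circ_1 C$, where the inside $C$ is a $1$-input diagram whose only curl is $\kappa$ --- a positive or a negative unary basic operator according to the sign of the loop $\kappa$ completes --- and the outside $D_1$ is a $1$-input type-$\mathcal{A}$ diagram with one fewer curl. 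The secondary induction then finishes this case.

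For the inductive step $d\ge 2$: by condition (ii) the interconnecting arcs join the $d$ discs into a connected configuration, so some interconnecting arc $\alpha$ runs between two distinct discs $i$ and $j$. I would let $\delta$ be the boundary of a thin regular neighbourhood $N$ of $\partial(\mathrm{disc}\ i)\cup\alpha\cup\partial(\mathrm{disc}\ j)$. This set deformation retracts onto a wedge of two circles and its neighbourhood is planar, so $N$ is a pair of pants and $\delta$ is a single circle; its interior contains precisely the discs $i$ and $j$, the arc $\alpha$, and short initial segments of every other string incident to $\mathrm{disc}\ i$ or $\mathrm{disc}\ j$. Cutting $D$ along $\delta$ then yields $D=E\circ_\delta F$, where $F$ (the inside) is a $2$-input diagram whose unique interconnecting arc is $\alpha$ --- every other string incident to $i$ or $j$ is cut into one or two boundary arcs of $F$, and $F$ may also carry some curls inherited from $D$ --- hence a binary basic operator, and $E$ (the outside) is the $(d-1)$-input diagram obtained from $D$ by collapsing $F$ to a single new input disc. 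One checks that $E$ is again connected; that $\delta$ carries a positive even number of boundary points (positivity follows from a short parity count, or from the fact that an empty $\delta$ would force $i,j$ to be incident only to one another and hence, by connectedness, would leave the outer boundary of $D$ empty, contradicting (i)); so $E$ and $F$ both satisfy (i) and (ii), the induction hypothesis applies to $E$, and $D=E\circ_\delta F$ is a finite composition of basic operators.

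The one point I have glossed over, and which I expect to be the main obstacle, is verifying condition (iii) for the two pieces: that the orientation induced on the cutting circle $\delta$ is again alternating, so that $F$ and $E$ are genuinely operators of the alternating oriented planar algebra and the composition $E\circ_\delta F$ makes sense. This is exactly where the consistent orientation of Section~\ref{subsec:GravityInformation} enters: a smoothing in $\Cobo$ carries a checkerboard $2$-colouring of its complementary regions in which every strand is oriented with, say, the black region on its left, and this colouring is inherited by $F$ and $E$. Since the strings of $D$ are pairwise disjoint, a circle transverse to the diagram meets an even number of strings, and as one travels along it the colour of the ambient region flips at each crossing; the in/out type of $\delta$ at a crossing is dictated by that colour flip, so the in/out types alternate around $\delta$. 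I would isolate this as a preliminary lemma --- ``cutting an alternating oriented diagram along a transverse circle produces two alternating oriented diagrams'' --- prove it first, and then the two inductions above go through as written.
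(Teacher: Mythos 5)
Your proof is correct and takes essentially the same approach as the paper: handle the one-input case by peeling off unary closure operators, and for $d\ge 2$ inputs use the connectivity hypothesis (ii) to peel off binary joins until a single input remains. Your argument simply makes explicit, via the two nested inductions and the checkerboard verification of the alternating condition along the cutting circle, the steps that the paper compresses into a three-sentence sketch.
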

\begin{proof}
If $D$ has just one input, it is merely a ``closure operation'' 
obtained by connecting together some pairs of in/out strands in the input 
tangle. So in that case $D$ is a composition of basic unary operators as 
in Figure 2a. If $D$ has more than one input, then remembering that $D$ is 
connected, these inputs may first be connected together using binary 
operators as in Figure 2b, and then one may proceed as in the first part 
of the proof. \qed
\end{proof}

It will be useful to analyze how the rotation number of a smoothing changes when it is embedded in these basic operators.  if $U$ is a positive unary basic operator, and $\sigma \in \calSo$ can be embedded in $U$, then the standard closure of $\sigma$ is the same as the standard closure of $U(\sigma)$. So, $R(U(\sigma))=R(\sigma)$. If instead $U$ is a negative unary basic operator, then either the closure of $U(\sigma)$ has one positive loop less than the closure of $\sigma$,  or the closure of $U(\sigma)$ has one negative loop more than the closure of $\sigma$. Thus in all cases $R(U(\sigma))=R(\sigma)-1$; and finally if $B$ is a binary basic operator, and $\sigma_1, \sigma_2 \in \calSo$ can be embedded in its respective input discs, then is easy to see that $R(B(\sigma_1,\sigma_2))=R(\sigma_1)+R(\sigma_2)-1$. All of this leads to   
\begin{proposition}\label{prop:AsoRotNumber}
For each d-input alternating planar diagram $D$  there exists an integer $R_D$ (its associated rotation number), such as for each $d$-tuple of  oriented smoothings $\sigma_1,...,\sigma_d$ in which $D(\sigma_1,...,\sigma_d)$ makes sense, the rotation
number of $D(\sigma_1,...,\sigma_d)$ is given by:
\begin{equation}\label{eq:AsocRotNumber}
R(D(\sigma_1,...,\sigma_d))=R_D+\sum_{i=1}^dR(\sigma_i)\end{equation} 
\end{proposition}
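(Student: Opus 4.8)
The plan is to combine Proposition~\ref{prop:compplanar}, which says every operator $D$ in the alternating oriented planar algebra is a finite composition of basic operators, with the three local rotation-number identities established just before the statement. First I would invoke Proposition~\ref{prop:compplanar} to write $D$ as a composition of basic operators $U_1,\ldots,U_s$ (each either a positive unary, negative unary, or binary basic operator), applied in some order dictated by the structure of $D$. The idea is then to track how the quantity ``(rotation number of the current smoothing) $-$ (sum of rotation numbers of the original inputs)'' evolves as we apply the basic operators one at a time, and to read off $R_D$ as the total accumulated shift.

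Concretely, I would argue by induction on $s$, the number of basic operators in a fixed decomposition of $D$. In the base case $s=0$ the diagram is (a disjoint union / identity-type diagram) and $R_D=0$, so \eqref{eq:AsocRotNumber} holds trivially. For the inductive step, suppose $D = U \circ D'$ where $D'$ is a composition of $s-1$ basic operators and $U$ is a single basic operator applied to the output of $D'$ (together with, in the binary case, possibly a further input smoothing). By the inductive hypothesis applied to $D'$, the smoothing $D'(\sigma_1,\ldots,\sigma_d)$ has rotation number $R_{D'} + \sum_i R(\sigma_i)$. Now apply the relevant one of the three observations preceding the proposition: if $U$ is a positive unary basic operator the rotation number is unchanged; if $U$ is negative unary it drops by $1$; and if $U$ is binary, gluing in a further input $\sigma_j$, the rotation number becomes (rotation number of $D'$-output) $+ R(\sigma_j) - 1$. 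In each case the new rotation number is again of the form $R_D + \sum_i R(\sigma_i)$ with $R_D = R_{D'}$, $R_{D'} - 1$, or $R_{D'} - 1$ respectively (in the binary case the newly absorbed input contributes its own $R(\sigma_j)$ term, keeping the sum over \emph{all} inputs of $D$ intact). This proves the formula with an explicit $R_D$ determined by the decomposition.

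The one genuinely delicate point — the main obstacle — is well-definedness of $R_D$: the formula \eqref{eq:AsocRotNumber} asserts that $R_D$ is an invariant of the diagram $D$, but the inductive argument computes it from a \emph{choice} of decomposition into basic operators, and Proposition~\ref{prop:compplanar} does not assert uniqueness of that decomposition. The clean way around this is to note that well-definedness is automatic \emph{a posteriori}: since the identity $R(D(\sigma_1,\ldots,\sigma_d)) = R_D + \sum_i R(\sigma_i)$ must hold for \emph{every} admissible tuple $(\sigma_1,\ldots,\sigma_d)$, one may simply evaluate on any single convenient tuple (for instance one where each $\sigma_i$ is a standard smoothing with $R(\sigma_i)=0$, or whatever minimal choice makes $D(\sigma_1,\ldots,\sigma_d)$ legal) and \emph{define} $R_D := R(D(\sigma_1,\ldots,\sigma_d)) - \sum_i R(\sigma_i)$; the inductive argument above then shows this single value works for all tuples, so no choice of decomposition needs to be privileged. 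I would phrase the final writeup around this: first establish the existence of \emph{some} integer $R_D$ making the formula hold by the induction over basic operators, and remark that its value is then forced, hence an invariant of $D$ alone.
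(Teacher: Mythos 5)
Your proof is correct and follows essentially the same route the paper intends: the paper establishes the three local identities for positive unary, negative unary, and binary basic operators and then asserts the proposition with ``All of this leads to,'' leaving implicit exactly the induction over a decomposition into basic operators (via Proposition~\ref{prop:compplanar}) that you spell out. Your explicit remark on the well-definedness of $R_D$ independent of the chosen decomposition is a sensible addition, but it does not change the underlying argument.
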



\section{Diagonal complexes } \label{sec:On-Diagonal}
We begin this section with an example of diagonal complex and then we analyze what occurs when diagonal complexes are embedded in basic planar diagrams

\begin{example}\label{Ex:diagonal} As in \cite{Bar2}, a dotted line represent a dotted curtain, and $\HSaddleSymbol$ represents the saddle $\smoothing \longrightarrow \hsmoothing$. The basepoints are not marked as they make no difference. 
\begin{enumerate}
\item
\[\Omega_1= \begin{diagram}
\node{\begin{tabular}{c}\includegraphics[scale=.5]%
{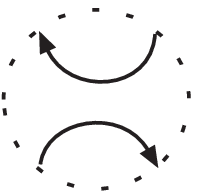}\end{tabular}\{-2\}}\arrow{e,t}{\left[\begin{array}{c}
\includegraphics[scale=.3]%
{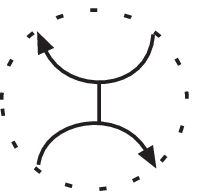} \\
\end{array}\right]}\node{\begin{tabular}{c}\includegraphics[scale=.5]%
{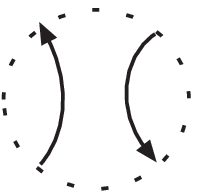}\end{tabular} \{-1\}} \end{diagram}.\] This is the Khovanov homology of the negative crossing $\undercrossing$, now with orientation in the smoothings. Remember that the first term has homological degree -1. In this example the rotation number in the first term is $1$ and in the second term it is $2$. Taking into account the shift in each of these smoothings, the rotation numbers are respectively -1 and 1. Observe that in each case, the difference between 2 times the homological degree $r$ and the shifted rotation number is $-1$.
\item The complex
\[\Omega_2= \begin{diagram}
\node{\begin{tabular}{c}\includegraphics[scale=.5]%
{figs/alternating11.eps}\end{tabular}\{-2\}}\arrow{e,t}{\left[0\right]}\node{\begin{tabular}{c}\includegraphics[scale=.5]%
{figs/alternating11.eps}\end{tabular} \{-1\}} \end{diagram}.\] 
is not diagonal. Observe that in this case $2r - R(\sigma_i\{q_{i}\})$ is not a constant.
\end{enumerate}
\end{example}


\subsection{Applying unary operators}
The reduced complexes in $\Kobo$ can be inserted in appropriate unary planar diagrams. Applying the DG algorithm we obtain again a reduced complex in $\Kobo$. The whole process can be summarized in the following steps:
\begin{enumerate}
\item placing of the complex in the  input disc of the unary planar arc diagram by using equations (\ref{eq:KobPA}) with $n=1$,
\item removing the loops obtained by applying lemma \ref{lem:Delooping}, i.e, replacing each of them by a copy of $\emptyset\{+1\}\oplus\emptyset\{-1\}$, and
\item applying gaussian elimination (lemma \ref{lem:GaussianElimination}), and removing in this way each invertible differential in the complex.
\end{enumerate}
 \begin{definition}Let $(\Omega,d)$ be a chain complex in $\Kobo(k)$, then a partial closure of $(\Omega,d)$ is a chain complex of the form $D_l\circ \cdots \circ D_1(\Omega)$ where $0\leq l<k$ and every $D_i$ ($1\leq i\leq l$) is a unary basic operator.
\end{definition}
We have diagonal complexes whose partial closures are again diagonal complexes.
\begin{definition}\label{def:cohdiagonalcomplex}
Let $(\Omega,d)$ be a $C$-diagonal complex in $\Kobo$ . We say that $(\Omega,d)$ is {\it coherently $C$-diagonal}, if for any unary operator $U$ (where $U(\Omega,d)$ makes sense) with associated rotation number $R_U$, we have that $U(\Omega,d)$ is a ($C-R_U$)-diagonal complex.
\end{definition}

We denote as $\calD(k)$ the collection of all coherently diagonal complexes in $\Kom(\Mat(\Cobo(k)))$, and as usual, we write $\calD$ to denote $\bigcup_k \calD(k)$. It is easy to prove that any coherently diagonal complex satisfies that:

\begin{enumerate}
\item after delooping any of the positive loops obtained in any of its partial closures, by using lemma \ref{lem:GaussianElimination}, the terms with negative shifted-degree can be eliminated.
\item after delooping any of the negative loops obtained in any of its partial closures, by using lemma \ref{lem:GaussianElimination}, the terms with positive shifted-degree can be eliminated.
\end{enumerate}

\begin{example}\label{Ex:cohdiagonal}
Since the computation of any other of its partial closures produces other diagonal complex, the Khovanov homology of  the negative crossing is an element of $\calD(2)$.

\parpic[r]{\includegraphics[scale=.6]%
{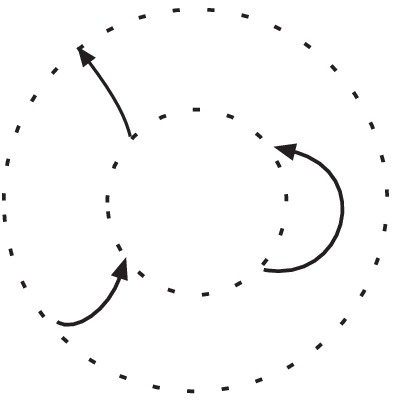}} Indeed, by embedding $\Omega_1$ of the example \ref{Ex:diagonal} in a unary basic planar diagram $U_1$ such as the one on the right which has an associated rotation number $R_{U_1} = 0$, produces the chain complex.

\begin{eqnarray*}U_1(\Omega_1) &=& \begin{diagram}
\node{\left[\begin{tabular}{c}\includegraphics[scale=.5]%
{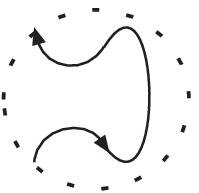}\end{tabular}\{-2\}\right]}\arrow{e,t}{\left[\begin{array}{c}
\includegraphics[scale=.3]%
{figs/saddlealternating12.eps} \\
\end{array}\right]} \node{\left[\begin{tabular}{c}\includegraphics[scale=.5]%
{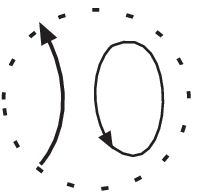}\end{tabular} \{-1\}\right]} \end{diagram}\\
& & \vspace{.5cm}\\
&\sim & \begin{diagram}
\node{\left[\begin{tabular}{c}\includegraphics[scale=.5]%
{figs/alternating121.eps}\end{tabular}\{-2\}\right]}\arrow{e,t}{\left[  \begin{array}{c}
\includegraphics[scale=.3]%
{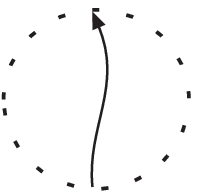} \\
\includegraphics[scale=.3]%
{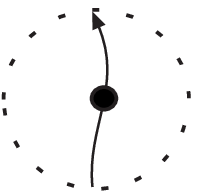} \\
\end{array}
\right]
} \node{\left[\begin{array}{l}\begin{tabular}{c}\includegraphics[scale=.5]%
{figs/alternating131.eps}\end{tabular}\{-2\}\\
\begin{tabular}{c}\includegraphics[scale=.5]%
{figs/alternating131.eps}\end{tabular}\{0\}\end{array}\right]} \end{diagram}
\end{eqnarray*}
The last complex is the result of applying lemma \ref{lem:Delooping}. Now applying lemma \ref{lem:GaussianElimination}, we obtain a homotopy equivalent complex \begin{eqnarray*}U_1(\Omega_1) &\sim& \begin{diagram}
\node{0}\arrow{e,t}{\left[0\right]} \node{\left[\begin{tabular}{c}\includegraphics[scale=.5]%
{figs/alternating131.eps}\end{tabular} \{0\}\right]} \end{diagram}
\end{eqnarray*}
which is also a diagonal complex with the same rotation constant -1. Obviously a complete proof  that this complex is coherently diagonal involves checking that the same happens when embedding the complex in each of its partial closures.\\

\end{example}

\begin{example}
The complex
\[\Omega_3= \begin{diagram}
\node{\begin{tabular}{c}\includegraphics[scale=.5]%
{figs/alternating12.eps}\end{tabular}\{-2\}}\arrow{e,t}{\left[0\right]}\node{\begin{tabular}{c}\includegraphics[scale=.5]%
{figs/alternating11.eps}\end{tabular} \{-1\}} \end{diagram}.\] 
is diagonal but it is not coherently diagonal. Indeed, if we embed it in $U_1$ the result is
 \begin{eqnarray*}U_1(\Omega_3) &=& \begin{diagram}
\node{\left[\begin{tabular}{c}\includegraphics[scale=.5]%
{figs/alternating121.eps}\end{tabular}\{-2\}\right]}\arrow{e,t}{\left[ 0 \right]
} \node{\left[\begin{array}{l}\begin{tabular}{c}\includegraphics[scale=.5]%
{figs/alternating131.eps}\end{tabular}\{-2\}\\
\begin{tabular}{c}\includegraphics[scale=.5]%
{figs/alternating131.eps}\end{tabular}\{0\}\end{array}\right]} \end{diagram}
\end{eqnarray*}
which is not diagonal.
\end{example}

\subsection{Applying binary operators}

\begin{proposition}\label{prop:ComplexUnaryToBinary} Let $\Omega_1$ and $\Omega_2$ be coherently diagonal complexes, and let $D$ be a binary basic planar operator for which $D(\Omega_1,\Omega_2)$ is well defined. For each partial closure $C(D(\Omega_1,\Omega_2))$, there exists an operator $D'$ defined using a diagram without curls and reduced chain complexes $\Omega_1',\Omega_2'$ in $\calD$ such that \[C(D(\Omega_1,\Omega_2))=D'(\Omega_1',\Omega_2')\]. \end{proposition}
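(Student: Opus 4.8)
The plan is to reduce the analysis of a partial closure $C$ of a binary composition $D(\Omega_1,\Omega_2)$ to the simpler situation of a curl-free binary diagram applied to two coherently diagonal complexes, by absorbing all the unary operators appearing in $C$ (and all the curls of $D$) into the two input slots. First I would decompose the composite operator $C\circ D$ into basic operators using Proposition~\ref{prop:compplanar}: $C\circ D$ is some iterated composition of basic unary and binary operators, but it has exactly two input discs (those of $\Omega_1$ and $\Omega_2$). Each unary basic operator occurring in this decomposition either acts on strands that both trace back to the same input disc, or it glues a boundary arc of one input to a boundary arc of the other; the latter case does not occur for a unary operator (a unary operator has a single input), so after pushing the binary gluings around one can arrange the planar diagram so that every curl sits on top of exactly one of the two inputs. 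Absorbing each such curl into the corresponding input means composing $\Omega_i$ with a unary basic operator $U$. Since $\Omega_i$ is coherently $C_i$-diagonal, Definition~\ref{def:cohdiagonalcomplex} tells us that $U(\Omega_i)$ is again $(C_i-R_U)$-diagonal; moreover, since every partial closure of $U(\Omega_i)$ is a partial closure of $\Omega_i$ (compositions of unary basic operators compose), $U(\Omega_i)$ is in fact coherently diagonal, so it lies in $\calD$. Iterating, after absorbing all curls we obtain coherently diagonal $\Omega_1',\Omega_2'\in\calD$.

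The second ingredient is to show that once all curls have been absorbed, what remains is genuinely a single curl-free binary operator $D'$ — that is, that the leftover gluings between the two inputs, together with whatever closure arcs survive, can be organized as one type-of-diagram with two input discs and no curl. This is a purely planar-combinatorial statement: the connectivity condition (ii) built into type-$\mathcal{A}$ diagrams guarantees that the union of the arcs of $C\circ D$ with the internal boundaries is connected, so after the curls are removed the remaining diagram still connects the two input discs, and by the alternation condition (iii) every boundary component still has an even number of strand endpoints. One then invokes the associativity/functoriality of planar-algebra composition (equation~\eqref{eq:KobPA} and the fact, recalled after Lemma~\ref{lem:GaussianElimination}, that these categories form planar algebras) to rewrite the composite $C(D(\Omega_1,\Omega_2))$ as $D'(U_1^{(1)}\cdots U_{a}^{(1)}(\Omega_1),\,U_1^{(2)}\cdots U_{b}^{(2)}(\Omega_2)) = D'(\Omega_1',\Omega_2')$, where $D'$ is the curl-free binary diagram that remains. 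The equality here is equality of complexes (on the nose, before any homotopy), since it is just a regrouping of the same planar-algebra expression.

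I expect the main obstacle to be the planar-combinatorial bookkeeping in the previous paragraph: one has to argue carefully that the curls of $D$ and the unary operators of $C$ can simultaneously be slid to sit directly over the two input discs without changing the operator, and that doing so does not accidentally create new curls or disconnect the diagram. The cleanest way to handle this is to work at the level of the underlying planar arc diagrams: view $C\circ D$ as a single planar diagram with two holes, classify its strings into curls, interconnecting arcs, and boundary arcs as in the paragraph preceding Definition~\ref{def:cohdiagonalcomplex}, and simply define $D'$ to be the diagram obtained by deleting all the curls (replacing each by the identity on the reduced input into which it has been absorbed). One then checks that $D'$ is still a legal alternating diagram — evenness of boundary strands is automatic, connectedness follows because curls never contribute to connectivity between distinct holes — and that, by the exchange/interchange law for planar-algebra operations, $C(D(\Omega_1,\Omega_2))$ and $D'(\Omega_1',\Omega_2')$ are literally the same object in $\Kobo$. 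Since $\Omega_1',\Omega_2'\in\calD$ and $D'$ is curl-free, this is exactly the asserted conclusion.
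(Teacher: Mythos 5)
Your proposal follows essentially the same route as the paper: view the composed diagram $C\circ D$ as a single two-input planar arc diagram, peel off its curls into unary closure operators $E$ and $E'$ acting directly on the two inputs, and let $D'$ be the curl-free binary diagram that remains, giving $C(D(\Omega_1,\Omega_2))=D'(E(\Omega_1),E'(\Omega_2))$ with $E(\Omega_1),E'(\Omega_2)\in\calD$ because partial closures of coherently diagonal complexes are again coherently diagonal. The paper's version of this argument is essentially pictorial (Figure~\ref{Fig:iqualplanars}); your last paragraph gives the same decomposition in words, which is the right place to land.

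One remark on your first paragraph, which is not wrong in its conclusion but is wrong in the reasoning offered: you assert that a unary basic operator in the decomposition of $C$ cannot ``glue a boundary arc of one input to a boundary arc of the other'' because it ``has a single input.'' That is a non sequitur: a unary operator applied after the binary gluing sees a single composite boundary, and the arc it adds can perfectly well have its two ends tracing back to the two different input discs. The point you actually need --- and which you do make correctly in the final paragraph --- is that such an arc, when you look at the whole planar diagram $C\circ D$ with its two holes, is classified as an \emph{interconnecting arc} of $D'$, not a curl, so it is never something you try to absorb into $\Omega_1$ or $\Omega_2$. Only arcs with both endpoints on the same input disc of $C\circ D$ are curls, and those by definition absorb cleanly into a unary closure of that input. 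If you keep the first paragraph, this is the justification it needs; otherwise your final ``cleanest way'' paragraph alone already constitutes a correct proof matching the paper's.
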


\parpic[r]{\includegraphics[scale=.40]%
{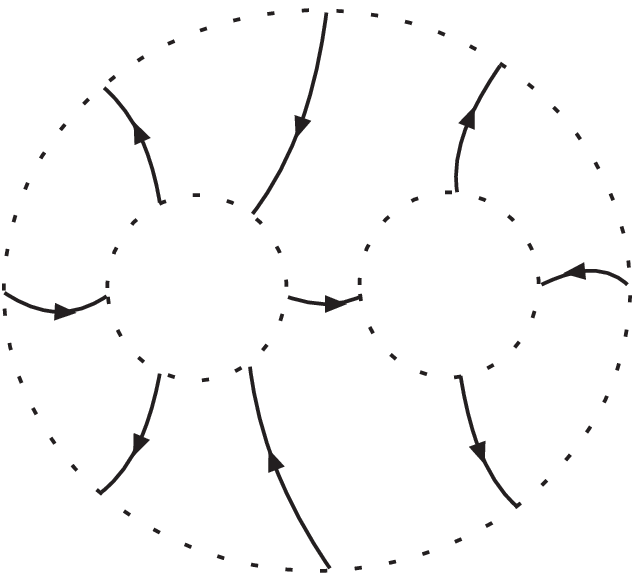}}
\begin{proof} \noindent We have a binary basic planar diagram $D$ such as the one at the right. A closure of $D(\Omega_1,\Omega_2)$ can be regarded as the composition of $\Omega_1$ and $\Omega_2$ in an operator $C(D)$ defined from this closure. We can also regard the disc $C(D)$ as a composition of two closure discs $E,E'$ embedded in a binary planar diagram $D'$ with no curl such that $C(D)=D(E,E')$. See Figure \ref{Fig:iqualplanars}.
 Hence, $C(D(\Omega_1,\Omega_2))=D(E(\Omega_1),E'(\Omega_2))$. Since $E(\Omega_1)$ and $E'(\Omega_2)$ are respectively closures of $\Omega_1$ and $\Omega_2$ which are elements of $\calD$, the proposition has been proved. \qed
\end{proof}

\begin{figure}[hbt] \centering

\includegraphics[scale=.5]%
{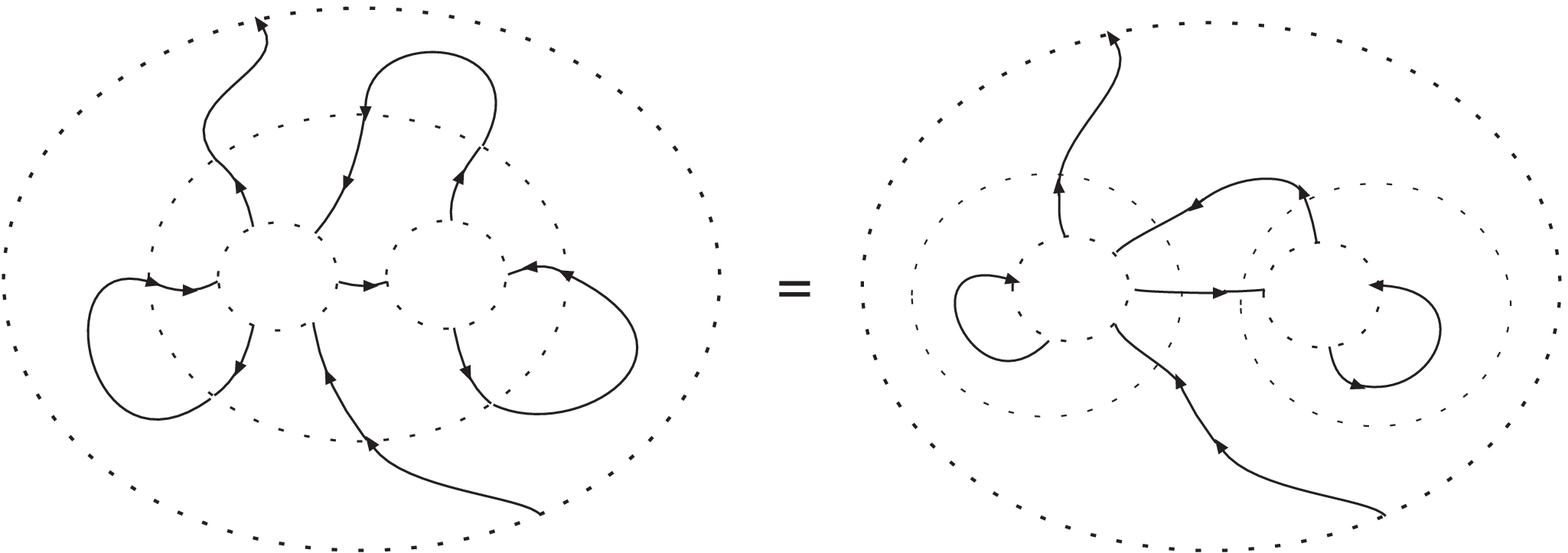}%
\caption{The closure of a binary operator can be considered as the binary composition of two unary operator closures}\label{Fig:iqualplanars}
\end{figure}


\section{Perturbed double Complexes}\label{URC}
The section that follows can be reformulated using the language of homological perturbation theory (e.g. \cite{Cr}, with his $(b,\delta)$ replaced by our soon-to-be-introduced $(d^0,\sum_{i \geq 1}d^i)$). Yet given the relative inaccessibility of the required "homological perturbation lemma," we have chosen formulate only what we need, using the language of the rest of this paper.

Given an additive category $\calC$, an \emph{(upward) perturbed double complex} in $\calC$ is a family $\Omega$ of objects $\{\Omega_{p,q}\}$ of $\calC$ indexed in $\mathbb{Z} \times \mathbb{Z}$, together with morphisms \[d^i:\Omega_{p,q}\rightarrow \Omega_{p-i+1,q+i} \, \text{ for each } \, i \geq 0,\]

such that if $\mathbf{d}=\sum d^i$ then $\mathbf{d}^2=0$; or alternatively,
\begin{equation}\label{Eq:reticularcomplexcondition}\sum_{i=0}^k d^i\circ d^{k-i}=0  \, \text{ for each } \, k \geq 0 \end{equation}

It will be convenient to illustrate the perturbed double complex as a lattice  as in Figure~\ref{Fig:PerturbedComplex} in which any node $\Omega_{p,q}$ is the domain of  arrows $d^0,d^1,\ldots$, which 
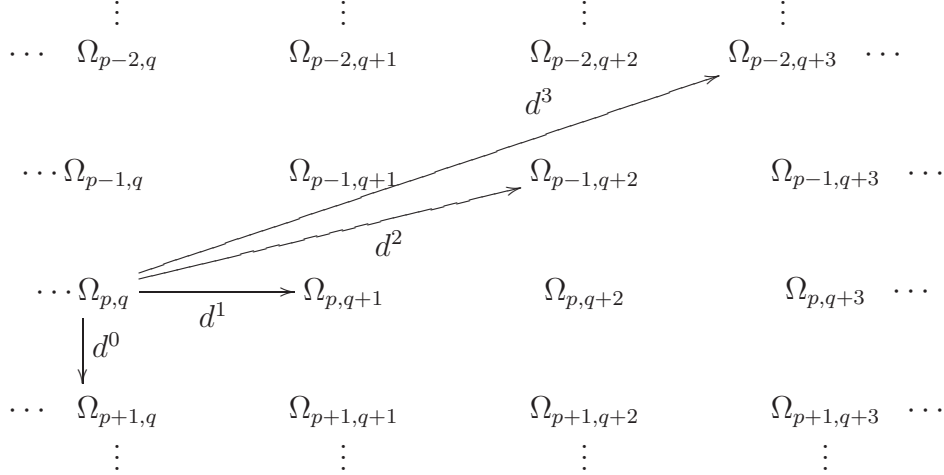
\begin{figure}[hbt] \centering
\begin{equation*} 
  \xymatrix@C=1.5cm{
{\begin{matrix}  & \vdots \\ \cdots & \Omega_{p-2,q} \end{matrix}}  & {\begin{matrix}  \vdots \\ \Omega_{p-2,q+1} \end{matrix}}
  & {\begin{matrix}  \vdots \\ \Omega_{p-2,q+2} \end{matrix}} &  *+[l]{\begin{matrix} \vdots & \\  \Omega_{p-2,q+3}& \cdots \end{matrix}} \\
    \cdots \Omega_{p-1,q} & \Omega_{p-1,q+1} & \Omega_{p-1,q+2}  &  {\begin{matrix} \Omega_{p-1,q+3} & \cdots \end{matrix}} \\
        \cdots \Omega_{p,q} \ar[d]^(.4){\begin{matrix}d^0\end{matrix}} \ar[r]_{\begin{matrix}d^1\end{matrix}} \ar[urr] _(.6){\begin{matrix}d^2\end{matrix}} \ar[uurrr]^(.6){\begin{matrix}d^3\end{matrix} } &  \Omega_{p,q+1}  &   \Omega_{p,q+2}  &  {\begin{matrix} \Omega_{p,q+3}  & \cdots \end{matrix}} \\
 {\begin{matrix}  \cdots & \Omega_{p+1,q} \\  & \vdots \end{matrix}}  & {\begin{matrix}  \Omega_{p+1,q+1} \\ \vdots \end{matrix}}  &  {\begin{matrix} \Omega_{p+1,q+2} \\ \vdots \end{matrix}} & {\begin{matrix}  \Omega_{p+1,q+3} & \cdots \\ \vdots & \end{matrix}}   }
\end{equation*}
\caption{A perturbed double complex}\label{Fig:PerturbedComplex}
\end{figure}
 satisfies the following infinite number of conditions
\begin{description}
\item[For $k=0$] Equation (\ref{Eq:reticularcomplexcondition}) reduces to $d^0\circ d^0 = 0 $.  This condition is equivalent to saying that for each fixed $q \in \mathbb{Z}$,  the objects $\Omega_{p,q}$ and the morphisms $d^0:\Omega_{p,q} \rightarrow \Omega_{p+1,q}$ form a complex. We call these complexes the \emph{vertical complexes} $\Omega_{\bullet,q}$ of the reticular complex

\item[For $k=1$] Equation (\ref{Eq:reticularcomplexcondition}) reduces to $d^0\circ d^1 + d^1\circ d^0= 0 $. This condition is equivalent to saying that all the squares in the diagram anticommute. 
\begin{equation*} 
  \xymatrix@C=1cm{
& \vdots \ar[d]^{d^0}   & \vdots \ar[d]^{d^0}   & \\
    \cdots   \ar[r]^{d^1} & \bullet \ar[r]^{d^1} \ar[d]^{d^0} &   \bullet \ar[r]^{d^1} \ar[d]^{d^0} &  \cdots \\
    \cdots   \ar[r]^{d^1} &  \bullet \ar[r]^{d^1} \ar[d]^{d^0} &   \bullet \ar[r]^{d^1} \ar[d]^{d^0} &  \cdots \\
    & \vdots   & \vdots    & }
\end{equation*}
\item[For $k=2$] Equation (\ref{Eq:reticularcomplexcondition}) reduces to $d^0\circ d^2 +d^1\circ d^1 + d^0\circ d^2= 0 $. This states that for each $p,q$ the sum of $d^1\circ d^1$ plus the compositions along consecutive sides of the parallelogram with vertices on $\Omega_{p,q}$, $\Omega_{p-1,q-2} $, $\Omega_{p+1,q} $ and $\Omega_{p,q+2} $  is zero.
\begin{equation*} 
  \xymatrix@C=1cm{
 \bullet    & \bullet   & \bullet     \ar[d]^{d^0} \\
 \bullet  \ar[urr]^{d^2}  \ar[r]_{d^1} \ar[d]_{d^0} &   \bullet \ar[r]^{d^1}  &  \bullet  \\
  \bullet \ar[urr]_{d^2}   & \bullet  &  \bullet \\
 }
\end{equation*}
\item[ For any $k \geq 0$] Equation (\ref{Eq:reticularcomplexcondition})  states that the sum of the compositions along consecutive sides of all possible \emph{parallelograms}  with diagonal on $\Omega_{p,q}$ and $\Omega_{p-k,q+k+2} $ is zero. 
\begin{equation*} 
  \xymatrix@C=1cm{
 \bullet    & \bullet   & \bullet   & \bullet \ar[d]^{d^0}   \\
 \bullet   &   \bullet   &  \bullet \ar[r]^{d^1}   & \bullet   \\
 \bullet \ar[d]_{d^0}  \ar[r]_{d^1} \ar[urr]_{d^2}  \ar[uurrr]^{d^3}  &   \bullet \ar[urr]^{d^2}  &   \bullet  & \bullet   \\
 \bullet  \ar[uurrr]_{d^3}   &   \bullet  &   \bullet  & \bullet   \\ }
\end{equation*}
\end{description}
We must include in the sum, a composition $d^{\sfrac{k}{2}}\circ d^{\sfrac{k}{2}}$ along the common diagonal of the parallelograms, for cases where $k$ is an even integer. \\

In the same way as in the case of double complexes, a perturbed double complex is associated to a chain complex that we denominate its \emph{total complex}, abbreviated $\operatorname{Tot}(\Omega)$, and defined as follows:
\begin{definition} Given a perturbed double complex $\Omega$, its total complex $\operatorname{Tot}(\Omega)$ is defined by
\[\operatorname{Tot}(\Omega)^n=\bigoplus_{p+q=n}\Omega_{p,q}\]

\[\mathbf{d}\big|_{\Omega_{p,q}}=\sum_{i\geq 0}d^i\]
\end{definition}
 Note that the condition stated by equation (\ref{Eq:reticularcomplexcondition}) makes certain that $\operatorname{Tot}(\Omega)$ is indeed a chain complex. We observe also that double complexes are just the special cases of  perturbed double complexes in which $d^i=0$ for each $i \geq 2$. \\
 \indent If no confusion arises, from now on we omit specific mention of the adjective total and we will write just $\Omega$ when we refer to $\operatorname{Tot}(\Omega)$. We shall simply say ``perturbed double complex" to mean the total complex associated to it.\\
 \indent One desired feature of perturbed double complex $\Omega$ is that the DG algorithm works well when applied to one of its vertical complexes $\Omega_{\bullet, q}$. What we mean with this last sentence is that  the homotopy equivalent complex obtained after applying the DG algorithm in objects and morphisms located in the same vertical complex of a perturbed double complex  is itself a perturbed double complex. We see this inmmediately.\\
 \indent First of all, by applying Lemma \ref{lem:Delooping} in $\Omega_{p,q}$, we do not change the configuration of perturbed double complex. Indeed, if $f:\Omega_{p,q} \rightarrow \Omega'_{p,q}$ is an isomorphism, then it is possible to obtain a perturbed double complex $\Omega'$ homotopy equivalent to $\Omega$ by substituting $\Omega_{p,q}$ by $\Omega'_{p,q}$, and by replacing any morphism $d^i$ with image in $\Omega_{p,q}$ by the morphism $f\circ d^i$, and any morphism $d^j$ with domain in $\Omega_{p,q}$ by $d^j\circ f^{-1}$. \\
\indent Secondly, if $\phi:b_1 \to b_2$ is an isomorphism in $\calC$, and if $D_0, E_0$ are column vectors of object in $ \calC$. Given  a perturbed double complex $\Omega$ with  
 \[ \Omega_{p,q}=\begin{bmatrix}b_1 \\ D_0\end{bmatrix} \text{ and } \,\, \Omega_{p+1,q}=\begin{bmatrix}b_2 \\ E_0\end{bmatrix},\]
 then eliminating $\phi$ by applying Lemma \ref{lem:GaussianElimination} does not bring any change in a vertical chain $\Omega_{\bullet,r}$ with $r\neq q$. Moreover, since the application of this lemma does not bring any new type of arrow in $\Omega$, we have that the homotopy equivalent complex obtained is also a perturbed double complex. Indeed, after applying Lemma \ref{lem:GaussianElimination}, the arrows \[d^j=\begin{pmatrix}\alpha_j \\ \beta_j \end{pmatrix}:\Omega_{p+j-1,q-j} \to   \Omega_{p,q}, \hspace{.5cm} d^i=\begin{pmatrix}\gamma_i \,\,\, \epsilon_{i0}\end{pmatrix}:\Omega_{p,q} \to   \Omega_{p-i+1,q+i}, \]
 \[d^j=\begin{pmatrix}\delta_j \\ \epsilon_{0j} \end{pmatrix}:\Omega_{p+j,q-j} \to   \Omega_{p+1,q}, \hspace{.5cm} d^i=\begin{pmatrix}\mu_i \,\,\, \nu_{i}\end{pmatrix}:\Omega_{p+1,q} \to   \Omega_{p-i+2,q+i},  \]
 and
 \[ d^{i+j}:\Omega_{p+j,q-j} \to \Omega_{p-i+1,q+i} \]
 have been change to
 \[d^j=\begin{pmatrix}\beta_j \end{pmatrix}:\Omega_{p+j-1,q-j} \to   \Omega'_{p,q}, \hspace{.5cm} d^i=\begin{pmatrix} \epsilon_{i0}-\gamma_i\phi^{-1}\delta_0 \end{pmatrix}:\Omega'_{p,q} \to   \Omega_{p-i+1,q+i}, \]
 \[d^j=\begin{pmatrix} \epsilon_{0j}-\gamma_0\phi^{-1}\delta_j \end{pmatrix}:\Omega_{p+j,q-j} \to   \Omega'_{p+1,q}, \hspace{.5cm} d^i=\begin{pmatrix} \nu_{i}\end{pmatrix}:\Omega'_{p+1,q} \to   \Omega_{p-i+2,q+i},  \]
 and
 \[ d^{i+j}-\gamma_i\phi^{-1}\delta_{j}:\Omega_{p+j,q-j} \to \Omega_{p-i+1,q+i} \]
  where $\delta_0$ and $\gamma_0$ are the morphisms $\delta_0:D_0 \to b_2$  and $\gamma_0:b_1 \to E_0$.\\
 \indent A consequence of all of this is that the DG algorithm can be applied to a vertical complex in $\Omega$ in such a way that  the others vertical complexes remain unchanged.


\section{Proof of Theorem \ref{Theorem:MainTheo2}}\label{sec:Theorem2} 
The main part of the proof of Theorem  \ref{Theorem:MainTheo2} is to show that the composition of coherently diagonal complexes in a binary basic operator is also coherently diagonal. So, before proving this theorem, let us analyze first what occurs when in this type of operator two smoothings are embedded. Recall that $\calSo$ denotes the class of alternating oriented smoothings.
\begin{proposition}\label{prop:ComplexSmoothingDiag}Let $\sigma$ and $\tau$ be smoothings in $\calSo$, and let $D$ be a suitable binary planar operator defined from a no-curl planar arc diagram with output disc $D_0$, input discs $D_1,D_2$, associated rotation constant $R_D$ and with at least one boundary arc ending in $D_1$. Then there exists a closure operator $E$ and a unary operator $U$ such that $D(\sigma, \tau)=U(E(\sigma))$. Moreover, if $(\Omega,d)$ is coherently $C$-diagonal, then $D(\Omega,\tau)$ is ($C-R(\tau)-R_D$)-diagonal.
\end{proposition}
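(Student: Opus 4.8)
The plan is to reduce the assertion about complexes to a single application of Definition~\ref{def:cohdiagonalcomplex}, once the factorization $D(-,\tau)=U\circ E$ has been exhibited at the level of smoothings. First I would insert the fixed smoothing $\tau$ into the input disc $D_2$ of $D$, producing a one-input planar diagram $D_\tau$ whose only remaining input disc is $D_1$. Since $D$ is curl-free, every arc of $D$ meeting $\partial D_2$ is either an interconnecting arc (other end in $\partial D_1$) or a boundary arc (other end in $\partial D_0$); hence splicing the strands of $\tau$ onto these arcs yields only strands of $D_\tau$ with both endpoints in $\partial D_1\cup\partial D_0$, and no closed components appear. The strands of $D_\tau$ with both endpoints in $\partial D_1$ are its curls: let $E$ be the one-input diagram consisting of exactly those curls together with trivial strands for the remaining points of $\partial D_1$, and let $U$ be the complementary, curl-free one-input diagram. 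One checks, by taking the intermediate disc to be a disc containing $D_1$ together with all the curls of $D_\tau$, that $D_\tau=U\circ E$ as planar operators, so that $D(\sigma,\tau)=D_\tau(\sigma)=U(E(\sigma))$. By Proposition~\ref{prop:compplanar}, $E$ is a composition of basic unary operators, i.e.\ a closure operator; and the hypothesis that some boundary arc of $D$ ends in $D_1$ guarantees that $D_\tau$ -- hence $E(\sigma)$ -- keeps at least one boundary point, so $E$ is a genuine partial closure and $U$ a bona fide unary operator. Note that $E$ and $U$ depend only on $D$ and $\tau$.

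For the ``moreover'' part, let $R_E$ and $R_U$ denote the associated rotation numbers of $E$ and $U$ (Proposition~\ref{prop:AsoRotNumber}). Comparing $R(D(\sigma,\tau))=R_D+R(\sigma)+R(\tau)$ with $R(U(E(\sigma)))=R_U+R_E+R(\sigma)$, an identity valid for every $\sigma\in\calSo$, forces $R_U+R_E=R_D+R(\tau)$; hence the unary operator $V:=U\circ E$ has associated rotation number $R_V=R_U+R_E=R_D+R(\tau)$. Because $E$ and $U$ are planar operators, the identity $D(\sigma,\tau)=U(E(\sigma))$ upgrades from smoothings to cobordisms, and therefore $D(\Omega,\tau)=U(E(\Omega))=V(\Omega)$ as complexes in $\Kobo$. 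Since $(\Omega,d)$ is coherently $C$-diagonal, Definition~\ref{def:cohdiagonalcomplex} applied to the unary operator $V$ now yields that $V(\Omega)=D(\Omega,\tau)$ is $(C-R_V)$-diagonal, i.e.\ $(C-R(\tau)-R_D)$-diagonal, as claimed.

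The step I expect to be the main obstacle is the diagrammatic factorization itself: one must verify, staring at a picture in the spirit of Figure~\ref{Fig:iqualplanars}, that inserting $\tau$ into $D_2$ of a curl-free binary alternating diagram really produces a one-input alternating planar diagram that splits -- planarly and compatibly with the alternating orientations -- as a closure operator $E$ followed by a curl-free unary operator $U$, all without disturbing the input disc $D_1$ or the smoothing $\sigma$ sitting in it. The subtle points are that a single strand of $\tau$ may fuse several interconnecting arcs of $D$ into one curl of $D_\tau$, and that the alternating orientations must be tracked carefully so that $E$ and $U$ are again alternating planar diagrams and Proposition~\ref{prop:AsoRotNumber} genuinely applies to them. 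Once this is in place, the rotation-number bookkeeping and the invocation of Definition~\ref{def:cohdiagonalcomplex} are routine.
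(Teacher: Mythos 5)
Your proof is correct and follows essentially the same route as the paper's: exhibit the factorization $D(\sigma,\tau)=U(E(\sigma))$ by inserting $\tau$ into $D_2$, then use the coherently-diagonal hypothesis on $\Omega$ to control the result, with the rotation constant falling out of Proposition~\ref{prop:AsoRotNumber}. The one place you diverge, and where you are actually a bit cleaner, is the ``moreover'' step: the paper passes through the intermediate object $\Omega'=E(\Omega)$, asserts that $\Omega'$ \emph{and hence} $U(\Omega')$ are reduced diagonal complexes (the ``hence'' implicitly relies on $U$ being curl-free so that it neither creates loops nor new isomorphisms and shifts all rotation numbers uniformly), and then recomputes the rotation constant directly from the smoothings of $\Omega$. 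You instead observe that the composite $V=U\circ E=D_\tau$ is itself a single unary alternating planar operator with $R_V=R_D+R(\tau)$, and apply Definition~\ref{def:cohdiagonalcomplex} to $V$ in one stroke; this sidesteps the need to separately justify that $U$ preserves reducedness and diagonality, though it does lean on the (reasonable, but unremarked in the paper as well) fact that $D_\tau$ is a bona fide type-$\mathcal{A}$ one-input diagram. Your flagged concerns about connectivity after splicing $\tau$ and about tracking the alternating orientations are legitimate subtleties that the paper also handles only by pointing at a picture, so you are not missing anything the paper supplies.
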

  
\parpic[r]{\includegraphics[scale=.60]%
{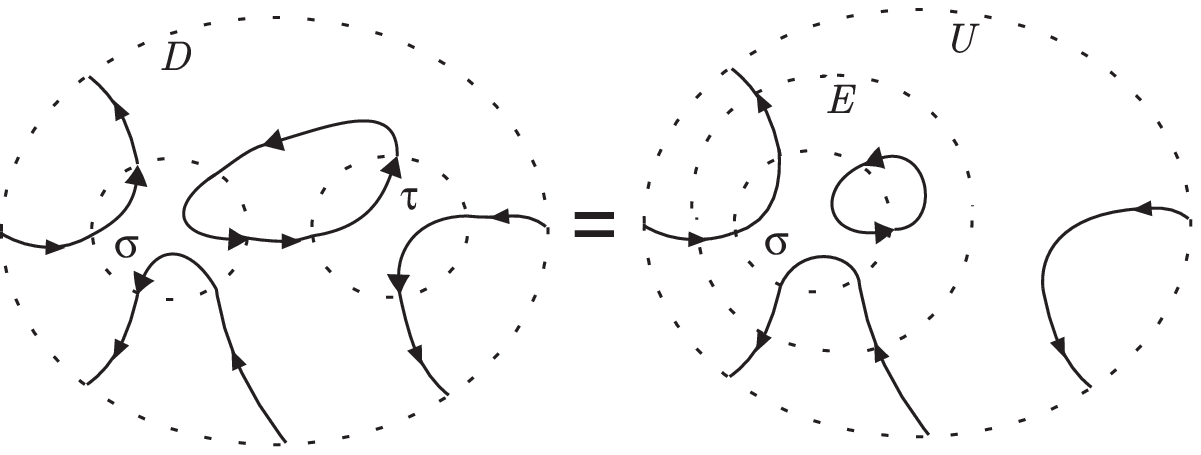}}
\begin{proof}  The picture on the right displays the equivalence between $D(\sigma, \tau)$ and $U(E(\sigma))$.  If instead of the smoothing $\sigma$, the coherently diagonal complex $\Omega$ is embedded in the first input disc of $D$, we have that $D(\Omega, \tau)=U(E(\Omega))=U(\Omega')$, where $\Omega'$ and hence $U(\Omega')$ are reduced diagonal complex.  To prove that the rotation constant of $D(\Omega,\tau)$ is $C-R(\tau)-R_D$, we observe that for each smoothing $\sigma\{q_{\sigma}\}$ in $\Omega$ the shifted rotation number satisfies $R(D(\sigma\{q_{\sigma}\},\tau))= R_D+R(\sigma\{q_{\sigma}\})+R(\tau)= R_D+2r -C+R(\tau)$. Therefore, $2r-R(D(\sigma\{q_{\sigma}\},\tau))=C-R(\tau)-R_D$ \qed
\end{proof}

\begin{proposition}\label{prop:InputComplexVector} Let $\Omega$  be a coherently $C$-diagonal complex. Let $\left[\sigma_j\right]_j$ be a vector of degree-shifted smoothings in $\calSo$, all of them with the same rotation number $R$.  Suppose that $D$ is an appropriate binary operator defined from a no-curl planar arc diagram with  associated rotation constant $R_D$ and at least one boundary arc coming from the first input disc.  Then $D(\Omega,\left[\sigma_j\right]_j)$ is a $(C-R-R_D)$-diagonal complex.
\end{proposition}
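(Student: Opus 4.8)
The plan is to reduce Proposition~\ref{prop:InputComplexVector} to the situation already analyzed in Proposition~\ref{prop:ComplexSmoothingDiag} by treating the vector $\left[\sigma_j\right]_j$ as a (trivial) complex concentrated in a single homological degree, and then chasing the bigrading. First I would observe that since all the $\sigma_j$ share the same rotation number $R$, the one-term complex $\Upsilon := \left[\sigma_j\right]_j$ (placed in homological degree $0$, say) is itself a diagonal complex: indeed it is already reduced (a single column with zero differential, no loops and no invertible cobordisms), and $2\cdot 0 - R(\sigma_j) = -R$ for every $j$, so $\Upsilon$ is $(-R)$-diagonal. Thus $D(\Omega,\Upsilon)$ is the composition, in a no-curl binary planar diagram, of two diagonal complexes.

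Next I would use the planar-algebra formula (\ref{eq:KobPA}) to write $D(\Omega,\Upsilon)^r = \bigoplus_j D(\Omega^r, \sigma_j)$, with differential induced (up to the sign $(-1)^{r}$ coming from the second factor having zero differential) by $D(d_\Omega, I)$. Since $D$ has at least one boundary arc coming from the first input disc, Proposition~\ref{prop:ComplexSmoothingDiag} applies to each fixed $\sigma_j$: there is a closure operator $E_j$ and a unary operator $U_j$ with $D(\sigma,\sigma_j) = U_j(E_j(\sigma))$, and $D(\Omega,\sigma_j)$ is $(C - R - R_D)$-diagonal (using $R(\sigma_j)=R$). The key point is that the rotation constant $C - R - R_D$ produced in Proposition~\ref{prop:ComplexSmoothingDiag} does not depend on $j$ — it depends only on $C$, $R$ and $R_D$ — so all the summands $D(\Omega,\sigma_j)$ of $D(\Omega,\Upsilon)$ are diagonal with \emph{the same} rotation constant. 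A direct sum of diagonal complexes sharing a common rotation constant is again diagonal with that constant (apply the DG algorithm summandwise; no new loops or invertible differentials are created across summands since the only differential is $D(d_\Omega,I)$, which is block-diagonal with respect to the $j$-decomposition), so $D(\Omega,\Upsilon)$ is $(C-R-R_D)$-diagonal.

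The main obstacle, and the point that needs the most care, is justifying that the reduction to summands is legitimate at the level of the DG algorithm: a priori, delooping loops and performing Gaussian eliminations inside $D(\Omega,\Upsilon)$ could mix the different $j$-blocks, so one cannot simply quote ``direct sum of diagonal complexes is diagonal'' for free. Here I would lean on the fact that the differential of $D(\Omega,\Upsilon)$ respects the decomposition $\bigoplus_j D(\Omega,\sigma_j)$ — because the second tensor factor carries the zero differential, the morphism $D(d_\Omega, I)$ cannot send the $j$-summand to the $j'$-summand for $j\neq j'$ — so the whole complex is literally the direct sum, \emph{as complexes in $\Kobo$}, of the complexes $D(\Omega,\sigma_j)$. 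Running the DG algorithm on each summand independently (as guaranteed for each by Proposition~\ref{prop:ComplexSmoothingDiag}) produces a reduced form of $D(\Omega,\Upsilon)$ in which every degree-shifted smoothing $\sigma^r_j$ satisfies $2r - R(\sigma^r_j) = C - R - R_D$. This is exactly the assertion that $D(\Omega,\left[\sigma_j\right]_j)$ is $(C-R-R_D)$-diagonal, completing the proof. \qed
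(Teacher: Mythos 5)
Your proposal is correct and takes essentially the same approach as the paper: replace $\Omega$ by a reduced $C$-diagonal representative, use the planar-algebra formula to decompose $D(\Omega,[\sigma_j]_j)$ as a direct sum $\bigoplus_j D(\Omega,\sigma_j)$, invoke Proposition~\ref{prop:ComplexSmoothingDiag} on each summand, and observe that the resulting rotation constant $C-R-R_D$ is independent of $j$. The extra care you take in explaining why the DG algorithm can be run summand-by-summand (the differential is block-diagonal in $j$) is a welcome elaboration of a point the paper leaves implicit, but it is not a different argument; the parenthetical sign remark ``$(-1)^r$'' is a small slip (the relevant sign for the first factor's differential is $+1$) that has no bearing on the conclusion.
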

\begin{proof}
The complex $\Omega$ is homotopy equivalent to a reduced $C$-diagonal complex $\Omega'$ .The complex $D(\Omega',\left[\sigma_j\right]_j)$ is the direct sum $\oplus_j \left[ D\left(\Omega', \sigma_j\right)\right]$. Thus, the proposition follows from the observation that by  proposition \ref{prop:ComplexSmoothingDiag}, each of its direct summands $D\left(\Omega', \sigma_j\right)$ is a diagonal complex with rotation constant $C-R-R_D$. 
\qed
\end{proof}

\begin{lemma}\label{Lemma:2InputDiagonalComplex} Let $\Omega_1$  be a coherently $C_1$-diagonal complex. Let $\Omega_2$ be a $C_{2}$-diagonal complex.  Suppose that $D$ is an appropriate binary operator defined from a no-curl planar arc diagram with  associated rotation constant $R_D$ and at least one boundary arc coming from the first input disc.  Then $D(\Omega_1,\Omega_2)$ is $(C_{1}+C_{2}-R_D)$-diagonal.
\end{lemma}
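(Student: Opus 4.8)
The plan is to reduce the statement to Proposition \ref{prop:InputComplexVector} by resolving $\Omega_2$ into its graded pieces and using the perturbed-double-complex bookkeeping of Section \ref{URC}. First I would replace both complexes by reduced representatives: $\Omega_1$ by a reduced $C_1$-diagonal complex (still coherently diagonal, since coherence is a property of the homotopy type), and $\Omega_2$ by a reduced $C_2$-diagonal complex $\Omega_2' : \cdots \to [\tau^s_j]_j \to \cdots$ in which every degree-shifted smoothing $\tau^s_j$ satisfies $2s - R(\tau^s_j) = C_2$. Then $D(\Omega_1,\Omega_2')$, built via \eqref{eq:KobPA}, is naturally a double complex: the horizontal direction ($d^0$) comes from the differential of $\Omega_2'$ applied slot-by-slot, and the vertical direction comes from the differential of $\Omega_1$. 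For each fixed homological degree $s$ of $\Omega_2'$, the $s$-th column is exactly $D(\Omega_1, [\tau^s_j]_j)$, a composite of $\Omega_1$ with a vector of smoothings all of the same rotation number $R = R(\tau^s_j)$ (this common value is forced by diagonality of $\Omega_2'$ at fixed $s$, once we also fix the total homological degree; more precisely the smoothings $\tau^s_j$ split into finitely many rotation-number classes and we treat each class as in Proposition \ref{prop:InputComplexVector}).

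Next I would apply the DG algorithm to $D(\Omega_1,\Omega_2')$ one column at a time. This is exactly the situation analyzed at the end of Section \ref{URC}: delooping and Gaussian elimination performed within a single vertical complex of a perturbed double complex again yield a perturbed double complex, leaving the other columns' homotopy types untouched and introducing no new arrow types. So after reducing the $s$-th column by the DG algorithm, it becomes a reduced representative of $D(\Omega_1,[\tau^s_j]_j)$, which by Proposition \ref{prop:InputComplexVector} is $(C_1 - R(\tau^s_j) - R_D)$-diagonal — here I use that $\Omega_1$ is coherently $C_1$-diagonal and that $D$ is a no-curl binary operator with a boundary arc from the first input disc, so the hypotheses of that proposition are met. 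Carrying this out for all $s$ produces a perturbed double complex whose $(p,q)$-entries, read off the reduced columns, are degree-shifted smoothings $\mu$ with $R(\mu) = R(\tau^s_j) + (\text{contribution from the } \Omega_1\text{-slot})$, and with column-index $s$ satisfying $2s - R(\tau^s_j) = C_2$.

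Finally I would assemble the diagonal count for the total complex. For a surviving degree-shifted smoothing $\mu$ sitting in column $s$ at vertical homological degree $r_1$ (so its total homological degree is $r = r_1 + s$), the $C_1$-diagonality of the reduced $s$-th column gives $2r_1 - (R(\mu) - R(\tau^s_j)) = C_1 - R(\tau^s_j) - R_D$ by Proposition \ref{prop:InputComplexVector}, hence $2r_1 - R(\mu) = C_1 - 2R(\tau^s_j) - R_D$. Adding $2s = C_2 + R(\tau^s_j)$ to both sides yields $2(r_1+s) - R(\mu) = C_1 + C_2 - R(\tau^s_j) - R_D + \dots$; the $R(\tau^s_j)$ terms must cancel once the rotation-number contributions are tallied correctly, leaving $2r - R(\mu) = C_1 + C_2 - R_D$. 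After the DG algorithm has run to completion the total complex is reduced, so this computation shows $D(\Omega_1,\Omega_2)$ is $(C_1 + C_2 - R_D)$-diagonal, as claimed.

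The main obstacle I anticipate is the last bookkeeping step: one must check that running the DG algorithm column-by-column actually terminates in a \emph{globally} reduced total complex (a priori new isomorphism cobordisms could be created by the $d^i$ with $i \geq 1$ between different columns), and that the rotation-number arithmetic is consistent across the splitting of $\Omega_2'$ into rotation-number classes. The perturbed-double-complex framework of Section \ref{URC} is designed precisely to handle the first point — each DG step preserves the class and no new arrow types appear — but care is needed to argue that the $d^{i}$ for $i \geq 1$ never become invertible (they cannot, being non-identity cobordisms that strictly lower the $p$-coordinate or raise the $q$-coordinate), so the only isomorphisms ever removed are within columns; the second point is handled by noting that $D(\Omega_1, \Omega_2')$ splits as a finite direct sum over rotation-number classes of $\Omega_2'$, to each of which Proposition \ref{prop:InputComplexVector} applies with its own $R$, and the diagonal constant $C_1 + C_2 - R_D$ comes out the same for every class because $2s - R(\tau^s_j) = C_2$ is the defining relation of $C_2$-diagonality.
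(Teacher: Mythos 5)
Your proof follows essentially the same route as the paper: reduce $\Omega_2$, regard $D(\Omega_1,\Omega_2)$ as a double complex whose slices at fixed $\Omega_2$-degree $q$ are $D(\Omega_1,\Omega_2^q)$, apply Proposition~\ref{prop:InputComplexVector} to each slice (using that all smoothings in $\Omega_2^q$ share rotation number $2q-C_2$), and invoke the perturbed-double-complex machinery of Section~\ref{URC} to run the DG algorithm slice-by-slice without disturbing the others. Two bookkeeping slips are worth fixing. First, Proposition~\ref{prop:InputComplexVector} already says the reduced $s$-th slice satisfies $2r_1-R(\mu)=C_1-R(\tau^s_j)-R_D$; you instead wrote $2r_1-(R(\mu)-R(\tau^s_j))$ on the left, which introduces the spurious $R(\tau^s_j)$ you then have to wave away. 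With the correct formula, adding $2s=C_2+R(\tau^s_j)$ gives $2(r_1+s)-R(\mu)=C_1+C_2-R_D$ immediately, with no cancellation to tally. Second, your claim that the $d^i$ with $i\geq 1$ ``never become invertible'' because they change the column index is not a valid argument --- a cobordism between isomorphic objects in different slots can perfectly well be invertible. But the worry is unnecessary: after the slice-by-slice reduction every surviving smoothing already satisfies $2r-R=C_1+C_2-R_D$, so any further Gaussian elimination (to remove an invertible $d^i$, $i\geq1$) only deletes pairs of objects from that same diagonal, and the eventual fully reduced representative still lies on it, which is all the definition of $C$-diagonality requires.
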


\begin{proof}
Observe that $\Omega=D(\Omega_1,\Omega_2)$ is a double complex. Indeed, if $\Omega_2$ is the chain complex \[ \cdots \longrightarrow \Omega_2^{q-1} \longrightarrow \Omega_2^q \longrightarrow \Omega_2^{q+1} \dots \] then  $\Omega_{\bullet, q}$ is the planar composition $D(\Omega_1,\Omega_2^q)$. Assume that $\Omega_2$ is in its reduced form, then any of the smoothings in $\Omega_2^q$ has the same rotation number, $2q-C_2$. Thus, by proposition \ref{prop:InputComplexVector}, $\Omega_{\bullet, q}$ is homotopy equivalent to a reduced diagonal complex $\Omega'_{\bullet, q}$ with rotation constant  $C_{1}+C_2-2q-R_D$.  We already know that we can  apply delooping and gaussian elimination in $\Omega$ involving only elements of $\Omega_{\bullet, q}$ and obtain a homotopy equivalent complex that has no changes in another vertical chain complex of $\Omega$. In consequence, $\Omega$ is homotopy equivalent to a perturbed complex  $\Omega'$ in which each $\Omega_{\bullet,q}$ has been replaced by its correspondent reduced complex $\Omega'_{\bullet,q}$. Thus, for each obtained $\Omega'_{\bullet,q}$ and each of its homological degree $p$, we have $2p-R(\Omega_{p,q})=C_{1}+C_2-2q-R_D$. Therefore,  $\Omega'$ is a diagonal complex with rotation constant $C_{1}+C_2-R_D$.
\qed
\end{proof}
\vspace{.5cm}
\begin{proof} (Of \textbf{Theorem \ref{Theorem:MainTheo2}}) By proposition \ref{prop:compplanar}, we only need to prove that $\calD$ is closed under composition of basic operators. Let $(\Omega_1,d_1)\in \calD$ and let $U$ be a basic unary operator. Since $U(\Omega_1)$ is a partial closure of $(\Omega_1,d_1)$, $U(\Omega_1)$ is diagonal. Furthermore any partial closure of $U(\Omega_1)$ is also a partial closure of $(\Omega_1,d_1)$, so $U(\Omega_1)\in \calD$.\\

Let $(\Omega_1,d_1)$ and $(\Omega_2,d_2)$ be elements of $\calD$, and let $D$ be  a basic binary operator. By Lemma \ref{Lemma:2InputDiagonalComplex}, $D(\Omega_1,\Omega_2)$ is a diagonal complex. Let $C(D(\Omega_1,\Omega_2))$ be a partial closure of $D(\Omega_1,\Omega_2)$. The fact that $C(D)$ is only a partial closure implies that there is at least one boundary arc in $C(D)$. Without loss of generality, we can assume that there is one boundary arc ending in the first input disc of $C(D)$.  By proposition \ref{prop:ComplexUnaryToBinary} there exist $\Omega_1',\Omega_2'\in \calD$ and a binary operator $D'$ defined from a no-curl planar diagram such that $C(D(\Omega_1,\Omega_2))=D'(\Omega_1',\Omega_2')$. By using Lemma \ref{Lemma:2InputDiagonalComplex}, we obtain that $D'(\Omega_1',\Omega_2')$ is a diagonal complex.  \qed
\end{proof}
\section{Non-split alternating tangles and Lee's theorem} \label{sec:LeeTheorem}
\subsection{Gravity information} \label{subsec:GravityInformation}
Given a diagram of an alternating tangle, we add to it some special
information which will help us to compose the Khovanov invariant
of an alternating tangle in an alternating planar diagram. This is illustrated by drawing, in every
strand of the diagram, an arrow pointing in to the undercrossing, or
equivalently (if we have alternation), pointing out from the overcrossing. In a
neighbourhood of a crossing, the diagram looks like the one in Figure
\ref{gravicross}(a).
\begin{figure}[hbt]
\begin{tabular}{ccc}
\includegraphics[scale=0.6]%
{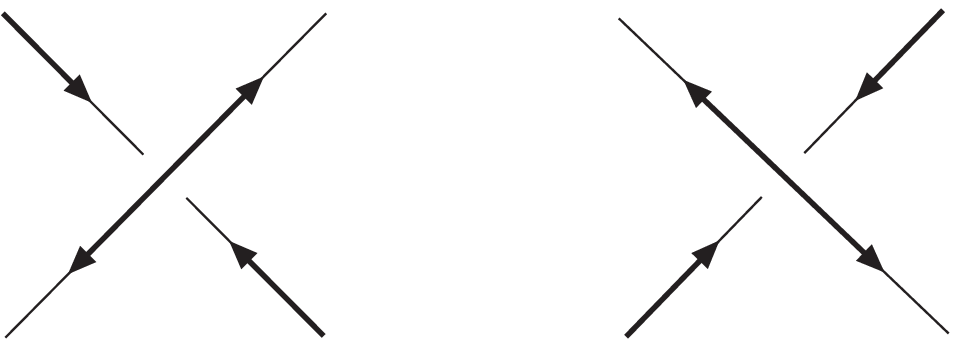} & \hspace{2cm} &\includegraphics[scale=.8]%
{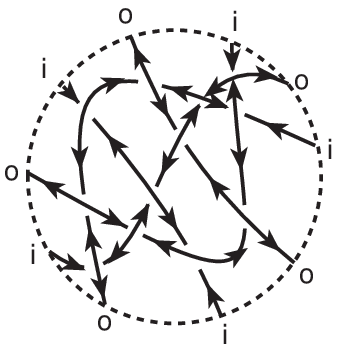} \\
(a) & & (b)
\end{tabular}
\caption{(a) Gravity information in a neighbourhood of a crossing. (b) Gravity information in the tangle. We use: o for out-boundary points
and i for in-boundary points.
}\label{gravicross}
\end{figure}
Figure \ref{gravicross}(b) shows a diagram of a tangle in which we have
added the gravity information to the whole tangle. We observe, (see Figure \ref{gravicross}(a)) that if we make a
smoothing in the crossing, the orientation provided by the gravity
information is preserved, and that a $0$-smoothing is clockwise and
$1$-smoothing is counterclockwise, see figure \ref{Fig:smoothing}.
It is easily observed as well that if we go into a non-split
alternating tangle for an in-boundary point and turn to the right (a
$0$-smoothing) every time that we meet a crossing, we are going to
get out of the tangle along the boundary point immediately to the right. Hence, the in-
and out-boundary points of the diagram of the tangle are arranged
alternatingly. These two observations are stated in the following
two propositions:
\begin{figure}[hbt] \centering

\includegraphics[scale=.8]%
{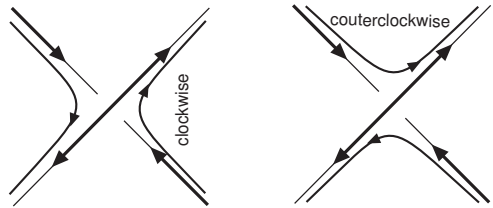}%
\caption{The smoothings in the diagrams preserve the gravity
information}\label{Fig:smoothing}
\end{figure}
\begin{proposition}\label{prop:gravit.preserv}The 0-smoothings and 1-smoothings preserve the gravity information. The first ones provides a clockwise orientation of the pair of strands in the smoothing, and the
last provides a counterclockwise orientation. \qed \end{proposition}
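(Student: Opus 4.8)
The plan is to reduce everything to a purely local check at a single crossing, since each smoothing acts only inside a small disc around the crossing and leaves the rest of the diagram, together with all of its gravity arrows, untouched. So it suffices to establish two things: (i) the arrows on the four half-edges meeting at the crossing are compatible with a coherent orientation of each of the two arcs produced by the smoothing, so that the gravity information survives the smoothing; and (ii) that coherent orientation runs clockwise for the $0$-smoothing and counterclockwise for the $1$-smoothing.

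First I would unwind the definition of the gravity arrows at a crossing $c$. By definition each arrow points toward the endpoint at which its arc passes under, equivalently (using alternation) away from the endpoint at which it passes over; hence, among the four half-edges at $c$, the two belonging to the under-strand carry arrows pointing into $c$ and the two belonging to the over-strand carry arrows pointing out of $c$. Next I would record the elementary combinatorial observation that drives the whole statement: the under-strand occupies a pair of \emph{opposite} corners of the crossing and the over-strand the complementary pair, while either smoothing joins two \emph{adjacent} corners; therefore each of the two arcs produced by a smoothing joins exactly one under-strand half-edge to exactly one over-strand half-edge. Consequently each such arc has one end with an arrow pointing toward $c$ and the other end with an arrow pointing away from $c$, and these are precisely the two directions of a single orientation of the arc — ``in along the under-half-edge, out along the over-half-edge''. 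This gives (i), and since nothing else in the diagram changes, the gravity information extends consistently over the smoothed picture.

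For (ii) I would fix the standard local picture (Figure \ref{Fig:smoothing}): place $c$ at the origin with the over-strand through two opposite corners, so the arrows point outward along those two corners and inward along the other two, and then simply read off the induced orientations of the two arcs of each smoothing, checking directly that the pair of strands winds clockwise in one case and counterclockwise in the other. Which of the two smoothings gets labelled $0$ and which $1$ is then pinned down by the usual crossing conventions recalled in Section~\ref{sec:QuickReview}.

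I expect the main obstacle to be purely one of bookkeeping rather than of genuine content: one must keep the corner-pairings, the four arrow directions, and the $0/1$ labelling simultaneously consistent with the sign conventions for crossings, and it is easy to get a mismatch of ``clockwise'' versus ``counterclockwise'' by drawing the crossing with the wrong strand on top. Once the picture is drawn carefully the verification is immediate, which is why Figure~\ref{Fig:smoothing} essentially constitutes the proof. \qed
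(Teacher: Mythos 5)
Your proposal is correct and takes the same route as the paper, which in fact gives no written proof at all: the proposition carries a \qed in its statement and the authors treat it as immediate from the local pictures in Figures~\ref{gravicross}(a) and~\ref{Fig:smoothing}. You have simply spelled out the bookkeeping (under-half-edges point in, over-half-edges point out, a smoothing pairs an under with an over corner, hence each arc is coherently oriented) that the figures encode.
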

\begin{proposition}\label{prop:AlternanSmoothly}In any non-split alternating tangle, if the $k$-th boundary point is an in-boundary point, then the $(k+1)$-th boundary point is an out-boundary point. \qed \end{proposition}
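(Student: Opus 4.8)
The plan is to reduce the statement to the elementary fact that the boundary arcs of a checkerboard-coloured diagram carry alternating colours, the bridge between the two being the compatibility of the gravity information with the colouring. Fix the given diagram of $T$ inside a disk; call the connected components of the disk minus $T$ the \emph{regions}, and colour them black and white in checkerboard fashion, which is possible for any diagram. Between two consecutive boundary points lies a boundary arc of the disk's boundary meeting no strand, so it borders a single region and thereby inherits a colour.

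First I would record a purely topological observation: consecutive boundary arcs have opposite colours. Indeed, the strand $s$ ending at a boundary point $p$ cuts the disk into two pieces, and the two boundary arcs incident to $p$ lie in these two distinct pieces; since a region is connected and disjoint from $s$ it is contained in a single piece, so the two arcs border different regions. Hence the $2k$ boundary arcs are coloured $\mathrm B,\mathrm W,\mathrm B,\mathrm W,\dots$ cyclically around the disk. The same remark shows that the two sides of \emph{any} arc running from boundary to boundary lie in different regions, so even a crossingless strand inherits a canonical ``black on the left, white on the right'' orientation; this disposes of the degenerate case where the gravity arrow is not literally defined.

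The substance of the proof is the claim that, for an appropriate choice of the checkerboard colouring, the gravity orientation is \emph{coherent} with it: each strand, oriented by its gravity arrow (which is well defined and globally consistent precisely because $T$ is alternating), has black on its left and white on its right. I would verify this by passing to the all-$0$ smoothing $\Sigma$ of $T$: its strands carry honest orientations induced by gravity, a $0$-smoothing merges two opposite --- hence equally coloured --- regions of $T$, so the colouring descends to $\Sigma$, and, examining a single crossing, the clockwise orientation of the pair of strands produced by a $0$-smoothing (Proposition~\ref{prop:gravit.preserv}) says exactly that the merged region lies to the right of each of the two new arcs. The one remaining point is that the colour of the merged region is the same at every crossing; this is where both hypotheses are needed, since for a connected alternating diagram the checkerboard colouring is essentially unique and every crossing looks the same relative to it, so one global colour --- call it white --- is merged everywhere. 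Thus every gravity-oriented strand of $\Sigma$, hence every gravity-oriented strand of $T$ in a neighbourhood of a boundary point, has white on the right. Figures~\ref{gravicross} and~\ref{Fig:smoothing} make this coherence visibly plausible, and I expect the careful bookkeeping of the checkerboard colouring of a connected alternating diagram --- the usual Tait-graph picture --- to be the only genuine obstacle in the whole argument.

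Coherence makes the proposition immediate. Suppose $p_k$ is an in-boundary point, so its gravity arrow points into the disk; of the two boundary arcs at $p_k$ one is then white and one black, and the sign conventions pin down which --- say the arc between $p_k$ and $p_{k+1}$ is the white one. By the first observation the arc between $p_{k+1}$ and $p_{k+2}$ is black. Hence the strand at $p_{k+1}$ has its white side toward the arc between $p_k$ and $p_{k+1}$, and the ``black-left / white-right'' rule then forces its gravity arrow to point out of the disk; that is, $p_{k+1}$ is an out-boundary point. Equivalently, and in the spirit of the discussion preceding the proposition, coherence says that following the gravity orientation through $\Sigma$ one keeps the white region permanently on one's right --- this is precisely the ``turn right at every crossing'' recipe --- so the gravity-oriented strand through an in-boundary point leaves the disk immediately to its right, with its arrow pointing outward.
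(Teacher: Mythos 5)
Your proof is correct but takes a genuinely different route from the paper. The paper's argument (given only informally, in the discussion preceding the proposition) is a direct traversal: enter the tangle at an in-boundary point, always turn right (take the $0$-smoothing) at each crossing --- by alternation this recipe is consistent --- and exit at the adjacent boundary point, which is therefore out-oriented. You instead use a global, structural argument: checkerboard-colour the diagram, note that consecutive boundary arcs carry opposite colours, and establish a coherence lemma (the gravity orientation keeps a fixed colour on its right), from which the alternation of in/out boundary points is immediate. The two are deeply related --- the ``turn right at every crossing'' walk of the paper is exactly a traversal of the boundary of one coloured region, as you note at the end --- but the organisation differs. The step you flag as needing care, that for a connected alternating diagram all crossings look the same relative to one of the two checkerboard colourings so that a single colour is merged by every $0$-smoothing, is indeed the substantive point: it is the classical Tait-graph characterisation of alternating diagrams, and the paper sidesteps it by arguing one strand at a time rather than globally. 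Your approach buys a reusable coherence statement between gravity and colouring (and handles the degenerate crossingless-strand case cleanly) at the cost of invoking that Tait-graph fact; the paper's is shorter because it never needs the global colouring.
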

Propositions \ref{prop:gravit.preserv} and \ref{prop:AlternanSmoothly} indicate that
the smoothings of a tangle could be drawn as trivial tangles in
which arcs are oriented alternatingly. Therefore, the Khovanov homology produces an alternating planar algebra morphism in the sense of \cite{Bar1}.
\subsection{Proof of Theorem  \ref{Theorem:MainTheo1}}
This proof is a direct application of Theorem  \ref{Theorem:MainTheo2}, the fact that the Khovanov homology is an  alternating planar algebras morphism, and the following proposition
\begin{proposition}\label{prop:1-crossing} The Khovanov complex
of a 1-crossing tangle is  coherently diagonal; namely, it is an element of $\calD(2)$.\end{proposition}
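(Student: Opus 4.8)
The plan is to verify Proposition~\ref{prop:1-crossing} by direct computation on the two one-crossing tangles $\overcrossing$ and $\undercrossing$, checking not just that each Khovanov complex is diagonal, but that every partial closure is diagonal as well. For a $2$-strand tangle the only partial closures are obtained by applying a single unary basic operator, so there are finitely many cases to inspect. First I would recall from Example~\ref{Ex:diagonal}(1) that $Kh(\undercrossing)$ is the two-term complex $\left[\smoothing\{-2\}\to\hsmoothing\{-1\}\right]$ (with homological degrees $-1$ and $0$), and that computing $2r-R(\sigma^r_j)$ term-by-term gives $-1$ in both homological degrees; hence $Kh(\undercrossing)$ is $(-1)$-diagonal. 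The mirror computation for the positive crossing $\overcrossing$ gives a complex in homological degrees $0$ and $1$ that is $1$-diagonal (the degree shifts and rotation numbers are the mirror images, and the sign of $C$ flips accordingly).

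Next I would check the partial closures. By Definition~\ref{def:cohdiagonalcomplex} it suffices to show that for every unary basic operator $U$ with associated rotation number $R_U$, the complex $U(Kh(\undercrossing))$ is $(-1-R_U)$-diagonal (and likewise for the positive crossing). Up to the symmetry of the $2$-input disc there are two closures to consider: the one $U_1$ with $R_{U_1}=0$ and the one, call it $U_2$, with $R_{U_2}=\pm1$ depending on whether the joining arc completes a positive or negative loop in one of the resolutions. The case $U_1$ is exactly Example~\ref{Ex:cohdiagonal}: placing $\Omega_1$ in $U_1$, delooping the loop created in the ``$0$''-smoothing via Lemma~\ref{lem:Delooping}, and applying Gaussian elimination (Lemma~\ref{lem:GaussianElimination}) to cancel the resulting identity cobordism yields the one-term complex $\left[\,\includegraphics[scale=.4]{figs/alternating131.eps}\,\{0\}\,\right]$ in homological degree $0$, which has $2r-R=-1$ — diagonal with the same rotation constant $-1=-1-R_{U_1}$. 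For the remaining closure $U_2$ one repeats the same procedure: insert the complex, deloop whichever of the two smoothings now contains a loop, and eliminate the invertible differential; the bookkeeping of Proposition~\ref{prop:AsoRotNumber} (namely $R(U(\sigma))=R(\sigma)$ for a positive unary operator and $R(U(\sigma))=R(\sigma)-1$ for a negative one) guarantees that the surviving term sits on the diagonal $2r-R=-1-R_{U_2}$. The positive-crossing cases are handled by the same computations under mirror symmetry.

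The only genuine subtlety — and what I expect to be the main obstacle — is keeping the degree shifts, the homological-degree conventions (the first term of $Kh(\undercrossing)$ sits in degree $-1$, not $0$), and the loop-sign conventions all consistent through the deloop/Gaussian-elimination steps, so that the claimed value of the rotation constant comes out uniformly $-1-R_U$ across all closures. This is purely a matter of careful bookkeeping rather than of a conceptual difficulty: once one agrees that delooping replaces $\bigcirc$ by $\emptyset\{+1\}\oplus\emptyset\{-1\}$ and that for a coherently diagonal complex the ``wrong-sign'' summand always gets Gaussian-eliminated (the two itemized properties stated just before Example~\ref{Ex:cohdiagonal}), the computation is forced. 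I would organize the writeup as: (i) record $Kh(\undercrossing)$ and its diagonality; (ii) record $Kh(\overcrossing)$ by mirror symmetry; (iii) enumerate the unary closures, compute each reduced form explicitly as in Example~\ref{Ex:cohdiagonal}, and read off the rotation constant; (iv) conclude that $Kh(\overcrossing),Kh(\undercrossing)\in\calD(2)$, which with Theorem~\ref{Theorem:MainTheo2} and the planar-algebra morphism property of Khovanov homology finishes the proof of Theorem~\ref{Theorem:MainTheo1}.
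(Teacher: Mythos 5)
Your plan is essentially the paper's own proof: the paper also simply observes that each of the finitely many partial closures of the two $1$-crossing complexes can be checked by hand to have a reduced diagonal form, and points to Example~\ref{Ex:cohdiagonal} as the prototype computation. One small bookkeeping slip worth flagging: you write that the second closure $U_2$ has $R_{U_2}=\pm 1$; by the discussion preceding Proposition~\ref{prop:AsoRotNumber}, a unary basic operator has associated rotation number $R_U=0$ when the curl is positive and $R_U=-1$ when it is negative (never $+1$), so the target diagonal after closure should be $C-R_U$ with $R_U\in\{0,-1\}$. Since you already flag the bookkeeping as the delicate part, this is easily corrected and does not change the structure of your argument.
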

\begin{proof} We just need to check each of the possible partial closures of the two 1-crossing tangles to observe that all of them have a reduced diagonal form. One of this can be seen in example \ref{Ex:cohdiagonal}. \qed
\end{proof}

\begin{proof} (Of \textbf{Theorem \ref{Theorem:MainTheo1}})  Any non-split alternating $k$-strand tangle
with $n$ crossings $T$, is obtained by a composition of $n$ 
1-crossing tangles, $T_1$,...,$T_n$, in a $n$-input
type-$\mathcal{A}$ planar diagram. Since the Khovanov homology is a planar algebra morphism,
by using the same $n$-input planar diagram for composing
$\Kh(T_1)$,...,$\Kh(T_n )$ we obtain the Khovanov homology of the original tangle.  According to Theorem \ref{Theorem:MainTheo2}, this is a complex in $\calD$.  \qed \end{proof}
\begin{corollary}\label{cor:1StrandTangleK} The Khovanov complex $[T]$ of a a non-split alternating $1$-tangle $T$ is homotopy equivalent to a complex
\[ \cdots \longrightarrow \Omega^{r}\{2r+K\}\longrightarrow \Omega^{r+1}\{2(r+1)+K\}\longrightarrow\cdots \]
where every $\Omega^r$ is a vector of single lines,
and $K$ is a constant.
\end{corollary}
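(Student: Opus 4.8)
The plan is to read the corollary off directly from Theorem \ref{Theorem:MainTheo1} and Definition \ref{def:diagonal complex}, together with the elementary observation that a $1$-tangle has essentially only one loop-free smoothing. First I would invoke Theorem \ref{Theorem:MainTheo1}: since $T$ is a non-split alternating tangle, $[T]=Kh(T)$ is $C$-diagonal for some integer $C$, i.e.\ $[T]$ is homotopy equivalent to a reduced complex
\[\Omega':\qquad\cdots\longrightarrow\left[\sigma^r_j\right]_j\longrightarrow\left[\sigma^{r+1}_j\right]_j\longrightarrow\cdots\]
which satisfies $2r-R(\sigma^r_j)=C$ for every degree-shifted smoothing $\sigma^r_j$ occurring in homological degree $r$.

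Next I would analyze which smoothings can occur. A $1$-tangle has exactly two boundary points, so every vertex of its cube of resolutions is a disjoint union of loops together with a single strand joining the two boundary points. As $\Omega'$ is reduced it contains no loops, so each $\sigma^r_j$ is a degree shift of the unique arc $\sigma_0$ connecting the two boundary points inside the disc; uniqueness here is only up to boundary-preserving isotopy, which is precisely the equivalence under which the rotation number is defined. Hence there is a single integer $R_0:=R(\sigma_0)$, and each $\sigma^r_j$ equals $\sigma_0\{q^r_j\}$ for some shift $q^r_j\in\mathbb{Z}$, so that $R(\sigma^r_j)=R_0+q^r_j$.

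Then I would substitute this into the diagonal condition: $2r-R_0-q^r_j=C$ forces $q^r_j=2r-R_0-C$, which is independent of $j$. Setting $K:=-R_0-C$ gives $q^r_j=2r+K$ for every $r$ and every $j$, so the homological-degree-$r$ term of $\Omega'$ is $\left[\sigma_0\{2r+K\}\right]_j=\Omega^r\{2r+K\}$, where $\Omega^r:=\left[\sigma_0\right]_j$ is a vector of single lines. Since $[T]\sim\Omega'$, this is exactly the claimed form.

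I do not expect a genuine obstacle here: the only point that deserves a sentence of care is the assertion that all loop-free smoothings of a $1$-tangle agree (up to the isotopy under which $R$ is invariant), so that the underlying smoothing — and therefore its rotation number $R_0$ — is literally the same in every homological degree; everything else is bookkeeping with the definition of a $C$-diagonal complex. If desired, $R_0$ can be computed from the standard closure of $\sigma_0$ using the conventions of Section \ref{sec:Alternating}, but its value is irrelevant, as it is absorbed into the constant $K$.
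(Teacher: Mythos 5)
Your proof is correct and takes essentially the same route as the paper: invoke Theorem \ref{Theorem:MainTheo1} to get a reduced $C$-diagonal complex, observe that the only loop-free smoothing of a $1$-tangle is the single arc, and unwind the diagonal condition to see the degree shifts form an arithmetic progression $2r+K$. The paper simply notes that the rotation number of the single arc is $1$ (rather than carrying an abstract $R_0$ and absorbing it into $K$), but that is a cosmetic difference.
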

\begin{proof}
We only have to apply Theorem \ref{Theorem:MainTheo1} and see that the rotation number of a single arc, which is the only simple possible smoothing resulting from a $1$-tangle, is one. \qed
\end{proof}
Figure \ref{Fig:1Diagonal} shows a diagonal complex whose smoothings  have only one strand. Since the rotation number of a smoothing with a unique strand is always 1, we have that the degree shift and the homological degree multiplied by two are in a single diagonal, i.e., $2r- q_r$ is a constant.
\begin{figure}[hbt] \centering
\includegraphics[scale=0.6]%
{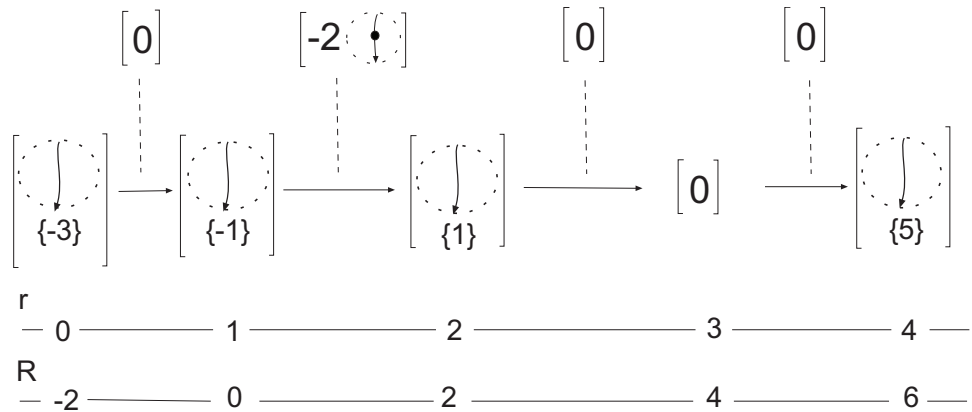}
\caption{A diagonal complex with only one strand in each of its smoothings.
}\label{Fig:1Diagonal}
\end{figure}
\begin{corollary}\label{cor:LeeTheorem}(Lee's theorem)The Khovanov complex $Kh(L)$ of a non-split alternating Link $L$ is homotopy equivalent to a
complex:
\[\cdots \longrightarrow\left(%
\begin{array}{c}
  \Phi^{r}\{q_r+1\} \\
  \Phi^{r}\{q_r-1\} \\
\end{array}%
\right)\longrightarrow \left(%
\begin{array}{c}
  \Phi^{r+1}\{q_{r+1}+1\} \\
  \Phi^{r+1}\{q_{r+1}-1\} \\
\end{array}%
\right)\longrightarrow\cdots \] where every $\Phi^{r}$ is a matrix of
empty 1-manifolds, $q_r=2r+K$, K a constant, and
 every differential is a matrix in the ground ring.
\end{corollary}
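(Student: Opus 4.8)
The plan is to derive Lee's theorem (Corollary \ref{cor:LeeTheorem}) as the $0$-tangle special case of Theorem \ref{Theorem:MainTheo1}, essentially repeating the argument of Corollary \ref{cor:1StrandTangleK} but with the boundary set $B=\emptyset$. First I would observe that a non-split alternating link $L$ is a non-split alternating $0$-tangle, so Theorem \ref{Theorem:MainTheo1} applies: $Kh(L)$ is $C$-diagonal for some constant $C$, meaning it is homotopy equivalent to a reduced complex $\Omega'$ with $2r-R(\sigma^r_j)=C$ for every degree-shifted smoothing $\sigma^r_j$ in homological degree $r$. Since $B=\emptyset$, each smoothing appearing in a \emph{reduced} complex has no loops (reducedness kills loops) and no strands (there are no boundary points), so each smoothing is the empty $1$-manifold $\emptyset$, carrying only a degree shift $\{q\}$. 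Thus $\Omega'^r$ is a column vector (equivalently a ``matrix of empty $1$-manifolds'' $\Phi^r$) of copies of $\emptyset\{q\}$ for various $q$, and every differential is a morphism between direct sums of $\emptyset$'s, i.e. a matrix of multiples of the identity cobordism — a matrix over the ground ring, as claimed.

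Next I would pin down the degree shifts. For the empty smoothing $\emptyset\{q\}$ we have $R(\emptyset) = 0$ (no loops at all), so $R(\emptyset\{q\}) = R(\emptyset)+q = q$. The diagonality condition $2r - R(\sigma^r_j) = C$ then reads $2r - q = C$, i.e. $q = 2r - C$. Setting $K := -C$ gives $q = 2r+K$, so in homological degree $r$ every summand is of the form $\emptyset\{2r+K\}$, with the same shift for all summands in that degree. This already gives that $Kh(L)$ is supported on the single diagonal $j = 2i + K$ in its reduced form.

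To match the precise formatting of the statement — where each term is written as a pair $\bigl(\Phi^r\{q_r+1\},\,\Phi^r\{q_r-1\}\bigr)^{T}$ with $q_r = 2r+K$ — I would note that the reduced complex genuinely carries \emph{unshifted} empty smoothings only after delooping was applied during the DG algorithm, and that in the conventional bookkeeping of Khovanov homology of links one records the underlying graded vector space as $\mathcal{A}^{\otimes(\#\text{circles})}$ with $\mathcal{A}=\mathbb{Z}[X]/X^2$ in degrees $\pm 1$; equivalently, one can re-present the single-diagonal reduced complex by grouping the $\emptyset\{q\}$ summands sharing a common center $q_r$ into the two families $q_r\pm 1$. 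Concretely: the claim is that all shifts occurring in homological degree $r$ differ from a common value $q_r = 2r+K$ by $\pm 1$; but the argument above shows they are all \emph{equal} to $2r+K$, which is the stronger statement, and the displayed two-row form is simply the standard way to display a complex concentrated on two adjacent diagonals $j = 2i+K\pm 1$. I would therefore phrase the conclusion as: after one ``uncancels'' back to the standard Khovanov presentation (reintroducing the $\mathcal{A}$-structure, equivalently writing $\emptyset\{2r+K\} = \emptyset\{q_r-1\}$ where $q_r = 2r+K+1$), $Kh(L)$ takes the displayed form with $\Phi^r$ a matrix of empty $1$-manifolds and every differential a ground-ring matrix.

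The main obstacle is the cosmetic-but-delicate point of the last paragraph: reconciling the ``single diagonal of unshifted empties'' that falls directly out of Theorem \ref{Theorem:MainTheo1} with the ``two adjacent diagonals $q_r\pm1$'' convention in which Lee's theorem is classically stated. The content is identical — reducedness forces loop-free smoothings and hence, over $B=\emptyset$, a complex of shifted empty manifolds with shifts governed linearly by the homological degree — and the two-row display is just the image of this reduced complex under the standard delooping isomorphism $\emptyset\{+1\}\oplus\emptyset\{-1\}\cong\bigcirc$ run in reverse once per circle; so I would make that translation explicit and leave the rest to the already-established $C$-diagonality. Everything else (that differentials become ground-ring matrices, that the shift is affine in $r$) is immediate from $R(\emptyset\{q\})=q$ and the definition of a reduced diagonal complex.
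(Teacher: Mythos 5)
Your first paragraph contains a genuine gap: you apply Theorem~\ref{Theorem:MainTheo1} directly to the $0$-tangle $L$ and conclude that $Kh(L)$ is homotopy equivalent to a reduced complex in which \emph{every} shift in homological degree $r$ equals $2r-C$. But no such reduced complex can exist for a generic non-split alternating link: the Borromean rings in Table~\ref{Tab:homborr}, for example, have non-trivial homology in two distinct $q$-degrees at $i=0$, so any homotopy representative of $Kh(L)$ built from empty smoothings must carry \emph{two} shifts per homological degree, not one. In other words, $Kh(L)$ for a $0$-tangle is \emph{not} $C$-diagonal in the sense of Definition~\ref{def:diagonal complex}. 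The reason Theorem~\ref{Theorem:MainTheo1} cannot be applied as you do is that its proof assembles the tangle in a type-$\mathcal{A}$ planar diagram, and condition~(i) of that definition explicitly requires $k>0$ strings on the external boundary; a $0$-tangle cannot be the output of any type-$\mathcal{A}$ composition, so the theorem's scope stops at $k\geq 1$. Your third paragraph implicitly notices the mismatch (single diagonal vs.\ two) and tries to paper over it by ``uncancelling'' back to the standard presentation, but this reverses the logic: if the one-diagonal reduced complex you claim existed, it would be a strictly stronger statement than Lee's theorem and is simply false.

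The paper's own proof sidesteps exactly this issue. It never claims $Kh(L)$ is $C$-diagonal; instead, it writes $L$ as the closure of a non-split alternating $1$-tangle $T$, invokes Corollary~\ref{cor:1StrandTangleK} (where Theorem~\ref{Theorem:MainTheo1} \emph{does} apply, since $k=1>0$) to put $Kh(T)$ in the form $\Omega^r\{2r+K\}$ with $\Omega^r$ a vector of single arcs, and then applies the single closure operator. The arcs become circles, each $\bigcirc\{2r+K\}$ deloops by Lemma~\ref{lem:Delooping} into $\emptyset\{2r+K+1\}\oplus\emptyset\{2r+K-1\}$, and the dotted-cylinder cobordisms become ground-ring scalars. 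Crucially, the argument \emph{stops after delooping} and does not attempt Gaussian elimination on the resulting ground-ring matrices; this is what produces the two-diagonal form in the statement, and it is also why the final Remark needs a field like $\mathbb{Q}$ before further reduction is possible. To repair your proof you should replace the direct appeal to Theorem~\ref{Theorem:MainTheo1} with the detour through Corollary~\ref{cor:1StrandTangleK} followed by a single closure and one round of delooping.
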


\begin{proof}
Every non-split alternating link $L$ is obtained by putting a
1-strand tangle $T$ in a 1-input planar diagram with no boundary. Hence,  by applying the operator defined from this 1-input planar diagrams to the Khovanov complex of this
1-strand tangle, we obtain the Khovanov complex of
a link $L$. By doing that, the vectors of open arcs that we have in corollary \ref{cor:1StrandTangleK} become vectors of circles. Moreover, every cobordism of the complex transforms in a multiple of a dotted
cylinder. Thus, using Lemma \ref{lem:Delooping} converts every
single loop in a pair of empty sets
$ \emptyset\{2r+K+1\}$, $ \emptyset\{2r+K-1\} $ and
every dotted cylinder in an element of the ground ring. \qed
\end{proof}
Figure \ref{Fig:0Diagonal} displays the closure of the complex in Figure \ref{Fig:1Diagonal}. After applying lemmas \ref{lem:Delooping} and \ref{lem:GaussianElimination} we obtain the complex supported in two lines displayed on Figure \ref{Fig:0Diagonal}, as stated in  Lee's Theorem.
\begin{figure}[hbt] \centering
\includegraphics[scale=0.6]%
{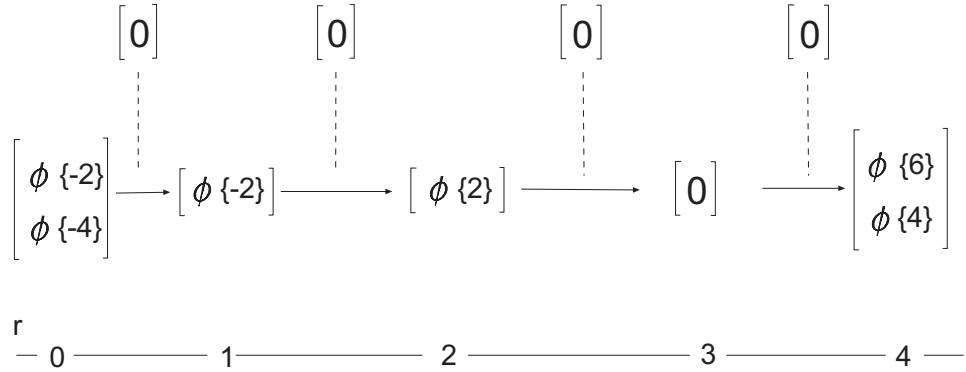}
\caption{A closure of a coherently diagonal complex is a width-two complex.
}\label{Fig:0Diagonal}
\end{figure}
\begin{remark}
It is clear that if our ground ring is $\mathbb{Q}$, as in the case
of \cite{Lee1}, we can use repeatedly lemma
\ref{lem:GaussianElimination} in the complex in corollary \ref{cor:LeeTheorem} and obtain a complex 
whose differential is zero, i.e, the Khovanov homology of the
link.
\end{remark}

\bigskip

\end{document}